\newtheorem{theorem}{Theorem}[section]
\newtheorem{lemma}[theorem]{Lemma}
\begin{document}

\title[The Double Dispersion Equation]{Instability and stability properties of traveling waves for the double dispersion equation}

\author[H. A. Erbay]{H. A. Erbay}
\address{Department of Natural and Mathematical Sciences, Faculty of Engineering, Ozyegin University,  Cekmekoy 34794, Istanbul, Turkey  }
\email{husnuata.erbay@ozyegin.edu.tr}

\author[S. Erbay]{S. Erbay}
\address{Department of Natural and Mathematical Sciences, Faculty of Engineering, Ozyegin University,  Cekmekoy 34794, Istanbul, Turkey  }
\email{saadet.erbay@ozyegin.edu.tr}

\author[A. Erkip]{A. Erkip}
\address{Faculty of Engineering and Natural Sciences, Sabanci University,  Tuzla 34956,  Istanbul,    Turkey}
\email{albert@sabanciuniv.edu}

\subjclass[2010]{74J35, 76B25, 35L70, 35Q51}
\keywords{Double dispersion equation,   Boussinesq equation,  Solitary waves, Instability by blow-up, Orbital stability}
\date{\today}
\maketitle

  \begin{abstract}
 In this article we are concerned with the instability and stability properties of traveling wave solutions of the double dispersion equation $~u_{tt} -u_{xx}+a u_{xxxx}-bu_{xxtt}  = - (|u|^{p-1}u)_{xx}~$ for $~p>1$, $~a> b>0$. The main characteristic of this equation is the existence of two sources of dispersion, characterized by the terms $u_{xxxx}$ and  $u_{xxtt}$.     We obtain an explicit condition in terms of $a$, $b$ and $p$ on wave velocities ensuring that   traveling wave solutions of the double dispersion equation  are strongly unstable by blow up. In the special case of the Boussinesq equation ($b=0$), our condition reduces to the one given in the literature. For the double dispersion equation, we also investigate orbital stability of traveling waves  by considering the convexity of a scalar function. We provide  analytical as well as numerical results on the variation of the stability region of wave velocities with  $a$, $b$ and $p$ and then state explicitly the conditions under which the traveling waves are orbitally stable.
 \end{abstract}

\setcounter{equation}{0}
\section{Introduction}
\noindent

The present paper is  concerned with the instability and stability properties of traveling wave solutions for the   double dispersion equation
\begin{equation}
    u_{tt} -u_{xx}+a u_{xxxx}-bu_{xxtt}  = -(|u|^{p-1}u)_{xx},   \label{dde0}
\end{equation}
where $a$, $b$ are positive real constants with $a > b$, and $p>1$. In particular we prove that traveling wave solutions are unstable by blow-up if the wave velocities of  the traveling waves are less than  a critical wave velocity. We also state explicitly a set of conditions on  $a$, $b$ and $p$  for which the traveling waves  are orbitally stable.

The double dispersion equation (\ref{dde0}) was derived as a mathematical model of the propagation of dispersive waves in a wide variety of situations, see for instance \cite{samsonov, porubov} and the references therein.  Well posedness (and related properties) of the Cauchy problem  for the double dispersion equation  have been studied in the literature  by several authors \cite{wang, yacheng, kutev}. It is interesting to note that (\ref{dde0}) is a special case of the general class of nonlinear nonlocal wave equations
\begin{equation}
    u_{tt}-Lu_{xx}=B(g(u))_{xx},  \label{class0}
\end{equation}
with pseudo-differential operators $L$ and $B$, studied in \cite{babaoglu, erbay-NA, erbay-JMAA}.  Indeed, for  the case
\begin{equation}
    L=( I-aD_{x}^{2}) ( I-bD_{x}^{2})^{-1}, ~~~~ B=( I-bD_{x}^{2})^{-1}, ~~~g(u)=- |u|^{p-1}u, \label{reduction}
\end{equation}
where $I$ is the identity operator and $D_x$ denotes the partial derivative with respective to $x$,  (\ref{class0}) reduces to   (\ref{dde0}).  The well-posedness of the Cauchy problem for the general class (\ref{class0})  was studied in \cite{babaoglu} and then the parameter dependent thresholds for global existence versus  blow-up were established in \cite{erbay-NA} for power nonlinearities. In a recent study \cite{erbay-JMAA}  on (\ref{class0}), again for power nonlinearities, the existence of traveling wave solutions $u=\phi_{c}(x-ct)$, where $c\in \mathbb{R}$ is the wave velocity,  has been established and  orbital stability of the traveling waves has been studied. The orbital stability is based on the convexity of a certain function $d(c) $ related to conserved quantities. Furthermore, it has been shown that when $L=I$, (\ref{class0}) becomes a special case of the Klein-Gordon-type equations and $d(c)$ can be computed explicitly.     In \cite{erbay-JMAA} the sharp threshold of instability/stability of traveling waves for this regularized Klein-Gordon equation has been established. In other words, for $L=I$, it has been shown that  traveling wave solutions of (\ref{class0})  are orbitally stable  for
\begin{equation}
  \frac{p-1}{p+3} <  c^{2} < 1  \label{tresh1}
\end{equation}
and are unstable by blow-up for
\begin{equation}
 c^{2} < \frac{p-1}{p+3}.   \label{tresh2}
\end{equation}
It remains an open question, however, whether a sharp threshold of instability/stability  can be obtained  for the double dispersion equation (\ref{dde0}) which is another  special case of (\ref{class0}).

For some limiting cases of (\ref{dde0}), the above question was fully answered in the literature. For the special case  $a=1$, $b=0$; (\ref{dde0}) becomes the  (generalized) Boussinesq equation \cite{bous}
\begin{equation}
    u_{tt} - u_{xx} + u_{xxxx} = -(|u|^{p-1}u)_{xx} \label{gen-bouss}
\end{equation}
which has received much attention in the literature. It was established in \cite{bona-sachs} that solitary wave solutions of (\ref{gen-bouss}) are orbitally stable if
\begin{equation}
     \frac{p-1}{4}<c^{2}<1  ~~~~\mbox{and} ~~~~ 1<p<5.  \label{bona}
\end{equation}
In \cite{liu1993}, it was  proved  that solitary waves for (\ref{gen-bouss}) are orbitally unstable if
\begin{equation}
     c^{2}<\frac{p-1}{4}  ~~~~\mbox{and} ~~~~ 1<p<5,
\end{equation}
or
\begin{equation}
     c^{2}<1  ~~~~\mbox{and} ~~~~ p\geq 5.
\end{equation}
On the other hand, in \cite{todorova} it was shown that traveling wave solutions of (\ref{gen-bouss}) are strongly unstable by blow-up  for
\begin{equation}
    c^{2}<  \frac{p-1}{2(p+1)}. \label{ohta}
\end{equation}
In the limiting case $a=b$; (\ref{dde0}) reduces to
\begin{equation}
    u_{tt} -u_{xx} = -( 1-bD_{x}^{2})^{-1}(|u|^{p-1}u)_{xx},   \label{klein}
\end{equation}
which is a special case of the regularized Klein-Gordon equation studied in \cite{erbay-JMAA} and therefore the results given by (\ref{tresh1}) and (\ref{tresh2}) are also valid for this special case. For the special case $a=0$, $b=1$; (\ref{dde0}) reduces to the improved Boussinesq equation \cite{ost}
\begin{equation}
    u_{tt} - u_{xx} - u_{xxtt} = -(|u|^{p-1}u)_{xx}, \label{imp-boussinesq}
\end{equation}
which has no traveling wave solution due to the minus sign on the right hand side.  For the sake of completeness, we point out that, in \cite{stubbe}, a sufficient condition for  orbital stability of solitary waves was given for a more general version of (\ref{dde0}):
\begin{equation}
    \left( 1+\gamma \left\vert D_{x}\right\vert^{\nu }\right) u_{tt}-\left(a_{0}+a_{1}\left\vert D_{x}\right\vert ^{\nu }\right) u_{xx}
            =-\left( \left\vert u\right\vert^{p -1}u\right) _{xx},
\end{equation}
where $\nu \geq 1$, $\gamma>0$, $a_{0}$ and $a_{1}$ are real constants.

The aim of the present study is to investigate  instability/stability properties of traveling wave solutions for (\ref{dde0}) when $~a> b>0$.  Our main result  is that for all wave velocities $c$ with $c^{2}< c_{0}^{2}$ where
\begin{equation}
     c_{0}^{2}=  \left(\frac{p-1}{p+1}\right) \left[ 1+\left( 1-\frac{b(p+3)(p-1)}{a(p+1)^2} \right)^{1/2}\right]^{-1},  \label{thresh3}
\end{equation}
traveling wave solutions of (\ref{dde0}) are unstable by blow-up. It is important to note that our condition $c^{2}< c_{0}^{2}$ for instability by blow-up matches the known results in the two limiting cases $a=1$, $b=0$ and $a=b$. That is, as it is expected, it reduces to (\ref{ohta})  when $a=1$, $b=0$ and to (\ref{tresh2}) when $a=b$. For the other result of this work,    we investigate both analytically and numerically orbital stability of traveling waves by applying the  convexity criterion  to (\ref{dde0}). We then identify conditions (see (\ref{conA})-(\ref{conC})) on wave velocity and the parameters $a$, $b$ and $p$ for which traveling wave solutions of (\ref{dde0})  are orbitally stable.    Recalling that we restrict the discussion to the case $~a> b>0$, one may ask whether similar conclusions are still true if $a<b$. We emphasize that for $a<b$, (\ref{dde0}) has traveling wave solutions with $c^{2}< a/b$ but we cannot make a conclusion about instability by blow-up in this case. The crucial fact is that for $a<b$ the dispersive term $u_{xxtt}$ in (\ref{dde0}) dominates and thus (\ref{dde0}) behaves much like (\ref{imp-boussinesq}). It seems that  our restriction $a > b$ is more structural than a technical one.

The structure of  the paper is as follows. In Section 2 we first review some previously known results, including the local existence theorem and the conserved quantities, for (\ref{dde0}) and then discuss the Pohozaev identities and the invariant sets.  In Section 3,  we prove instability by blow-up of traveling waves with $c^{2}< c_{0}^{2}$ for (\ref{dde0}). In Section 4, we announce orbital stability conditions for   traveling wave solutions of (\ref{dde0}).

Throughout this paper, we use the standard notation  for  function spaces. The symbol $\widehat u$ represents the Fourier transform of $u$, defined by $\widehat u(\xi)=\int_\mathbb{R} u(x) e^{-i\xi x}dx$. The $L^p$ ($1\leq p <\infty$)  norm of $u$ on $\mathbb{R}$ is denoted by  $\Vert u\Vert_{L^p}$. The inner product of $u$ and $v$ in $L^2$ is represented by $\langle u, v\rangle$. The $L^{2}$ Sobolev space of order $s$ on $\mathbb{R}$ is denoted by  $H^{s}=H^s(\mathbb{R})$ with the norm $\Vert u\Vert_{H^{s}}^2=\int_\mathbb{R} (1+\xi^2)^s |\widehat u(\xi)|^2 d\xi$.  The symbol $\mathbb{R}$ in $\int_{\mathbb{R}}$ will be mostly suppressed to simplify exposition.

\setcounter{equation}{0}
\section{Pohozaev Identities and Invariant Sets}
\noindent

\subsection{Preliminaries: Local Existence and Conserved Quantities}

We now list some preliminary results for (\ref{dde0}) (or equivalently, for (\ref{class0}) with   (\ref{reduction})). Local existence of the Cauchy problem for (\ref{dde0}) has been established in \cite{wang}. The local existence result given in \cite{babaoglu} for (\ref{class0}) will also apply. For our purposes it is sufficient to consider solutions in $ H^{1}$ and therefore we restrict our remarks concerning (\ref{dde0}) to this case. The local existence result in \cite{wang} implies that for initial data  in $H^{1}\times L^{2}$, the Cauchy problem for (\ref{dde0}) has a unique solution $u\in C([0,T),H^{1})\cap C^{1}([0,T),L^{2})$ for some $T>0$.  As in \cite{erbay-NA}, we now introduce new variables $(u, w) $, where $u=v_{x}$ and $w=v_{t}$  for a suitable function $v$. Then we consider the following equivalent initial-value problem:
\begin{eqnarray}
    && u_t=w_x,~~~x\in \mathbb{R},~~t>0  \label{sys1} \\
    && w_t= ( 1-b D_{x}^{2})^{-1}\left[( 1-a D_{x}^{2}) u_x-(|u|^{p-1}u)_{x}\right],~~~x\in \mathbb{R},~~t>0  \label{sys2} \\
    && u(x,0)=u_{0}(x),~~~w(x,0)=w_{0}(x),~~x\in \mathbb{R} \label{sys3}
\end{eqnarray}
for which the local existence theorem  in \cite{erbay-NA} is rephrased  as follows.
\begin{theorem}\label{theo2.1}
     For initial data $U_{0}=(u_{0},w_{0})\in H^{1}\times H^{1}$, there exists some $T>0$ so that the Cauchy problem (\ref{sys1})-(\ref{sys3}) is locally well-posed with solution $U=(u,w)\in C([0,T), H^{1}\times H^{1})$.
\end{theorem}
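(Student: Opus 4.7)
The plan is to recognize (\ref{sys1})--(\ref{sys3}) as a special case of the nonlocal wave equation class (\ref{class0}) with the specialization (\ref{reduction}), and to invoke the local well-posedness theorem from \cite{erbay-NA}. First I would verify the hypotheses required there: the Fourier symbol $\widehat{L}(\xi) = (1+a\xi^2)/(1+b\xi^2)$ is smooth, strictly positive, and bounded above (since $a>b>0$), so $L$ is a bounded isomorphism on the relevant Sobolev scale; $B = (I-bD_x^2)^{-1}$ is a smoothing operator of order $2$; and the power nonlinearity $g(u) = -|u|^{p-1}u$ with $p>1$ acts locally Lipschitz continuously from $H^1(\mathbb{R})$ to itself, via $H^1 \hookrightarrow L^\infty$ in one dimension.

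For a self-contained direct proof I would rewrite the system abstractly as
\begin{equation*}
\frac{d}{dt}\binom{u}{w} = A\binom{u}{w} + \binom{0}{-K(u)},
\end{equation*}
where $K(u) = (I-bD_x^2)^{-1}(|u|^{p-1}u)_x$ and $A$ is the linear operator whose Fourier symbol has eigenvalues $\pm i\xi\sqrt{(1+a\xi^2)/(1+b\xi^2)}$. These eigenvalues are purely imaginary, and since $b/a \leq (1+b\xi^2)/(1+a\xi^2) \leq 1$ one obtains an equivalent norm on $H^1 \times H^1$ that is conserved by the flow, so $A$ generates a strongly continuous group on $H^1 \times H^1$. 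Passing to Duhamel's form
\begin{equation*}
U(t) = e^{tA}U_0 + \int_0^t e^{(t-s)A}\binom{0}{-K(u(s))}\,ds
\end{equation*}
reduces the problem to a Picard iteration in a closed ball of $C([0,T], H^1 \times H^1)$.

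The key nonlinear estimate uses $H^1(\mathbb{R}) \hookrightarrow L^\infty(\mathbb{R})$ together with the elementary inequality $\bigl||u|^{p-1}u - |v|^{p-1}v\bigr| \leq C\bigl(|u|^{p-1}+|v|^{p-1}\bigr)|u-v|$, valid for all $p>1$. Combined with the fact that $(I-bD_x^2)^{-1}\partial_x$ gains one derivative in $L^2$, this yields the local Lipschitz bound
\begin{equation*}
\|K(u) - K(v)\|_{H^1} \leq C\bigl(\|u\|_{H^1}^{p-1}+\|v\|_{H^1}^{p-1}\bigr)\|u-v\|_{H^1},
\end{equation*}
and the contraction mapping principle applies once $T$ is chosen small in terms of $\|U_0\|_{H^1\times H^1}$. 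The main obstacle is precisely this Lipschitz bound when $p \in (1,2)$, where $s \mapsto |s|^{p-1}s$ fails to be Fr\'echet differentiable; the pointwise inequality above substitutes for a derivative estimate and makes the argument proceed uniformly in $p > 1$.
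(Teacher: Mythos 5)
Your proposal is correct, but note that the paper itself offers no proof of this theorem: it is presented as a rephrasing of the local existence theorem of \cite{erbay-NA} for the general class (\ref{class0}), specialized via (\ref{reduction}). Your first paragraph --- checking that the symbol $(1+a\xi^{2})/(1+b\xi^{2})$ of $L$ is bounded above and below for $a>b>0$, that $B=(I-bD_{x}^{2})^{-1}$ is smoothing, and that $g(u)=-|u|^{p-1}u$ is locally Lipschitz on $H^{1}$ via $H^{1}(\mathbb{R})\hookrightarrow L^{\infty}(\mathbb{R})$ --- is exactly the verification implicit in that citation. The self-contained argument you then give is sound: the symbol matrix of $A$ has purely imaginary eigenvalues $\pm i\xi\sqrt{(1+a\xi^{2})/(1+b\xi^{2})}$ and, since $1\le (1+a\xi^{2})/(1+b\xi^{2})\le a/b$, the diagonalizing transformation is uniformly bounded, so $A$ generates a bounded strongly continuous group on $H^{1}\times H^{1}$ (equivalently, the weighted Fourier norm you describe is conserved); Duhamel plus contraction mapping then closes once one observes that $(I-bD_{x}^{2})^{-1}\partial_{x}$ has the bounded, order $-1$ symbol $i\xi/(1+b\xi^{2})$, so that $K$ maps $L^{2}$ into $H^{1}$, and that the pointwise inequality $\bigl||u|^{p-1}u-|v|^{p-1}v\bigr|\le C\bigl(|u|^{p-1}+|v|^{p-1}\bigr)|u-v|$ handles the non-differentiable range $1<p<2$. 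This is essentially the same semigroup-plus-fixed-point strategy as the cited source; what your version buys is independence from the external reference, at the cost of re-deriving what the paper deliberately outsources.
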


The energy and momentum functionals given in \cite{erbay-NA} turn out to be
\begin{eqnarray}
    \!\!\!\!\!\!\!\!\!\!\! \!\!\!\!\!\!\!\!\!\!\!
    && E(U) = E(u,w)=\frac{1}{2}\int (w^{2}+b w_{x}^{2})dx+\frac{1}{2}\int (u^{2}+a u_{x}^{2})dx-\frac{1}{p+1}\int |u|^{p+1}dx,   \label{energy}\\
    \!\!\!\!\!\!\!\!\!\!\! \!\!\!\!\!\!\!\!\!\!\!
    && M(U) = M(u,w)=\int (uw+bu_{x}w_{x})dx \label{momentum}
\end{eqnarray}%
for (\ref{sys1})-(\ref{sys2}).  The energy and momentum are conserved quantities of (\ref{sys1})-(\ref{sys2}), namely for a solution $U(t)$ of (\ref{sys1})-(\ref{sys2}) both $E(U(t))$ and $M(U(t))$ are independent of $t$ \cite{erbay-NA}. We note that $H^{1}\times H^{1}$ is the natural energy and momentum space.

\subsection{Pohozaev Identities and Invariant Sets}
\noindent

Traveling wave solutions $u(x, t) =\phi_{c} (x-ct)$ of (\ref{dde0})  satisfy the differential equation
\begin{equation}
    ( a-bc^{2}) \phi_{c}^{\prime \prime }-(1- c^{2}) \phi_{c} +\vert \phi_{c} \vert^{p-1}\phi_{c} =0  \label{phi}
\end{equation}%
where we have assumed that $\phi_{c}$ and all its derivatives  decay at infinity. For $a-bc^{2}>0$ and $1-c^{2}>0$,  (\ref{phi}) has a unique solution up to translation, namely
\begin{equation}
    \phi_{c}(x) =\left[ \frac{1}{2}(p+1)(1-c^{2})\right]^{\frac{1}{p-1}}\mbox{sech}^{\frac{2}{p-1}}\left[ \frac{1}{2}(p-1)( \frac{1-c^{2}}{a-bc^{2}})^{\frac{1}{2}}x \right]. \label{solitary}
\end{equation}%
As we assume that $a> b$, the above two conditions  given for the wave velocity reduce to $c^{2}<\min \left\{1,  a/b\right\} =1$. We note that this is exactly the bound obtained in \cite{erbay-JMAA}, which is due to the fact that  the symbol $l(\xi)$  of the operator $L$ in (\ref{reduction}) satisfies
\begin{displaymath}
    1\leq l(\xi) =\frac{1+a\xi^{2}}{1+b\xi^{2}} \leq \frac{a}{b}
\end{displaymath}
for $a > b$.

We make extensive use of the following two Pohozaev identities.
\begin{lemma}\label{lem2.2}
    Traveling wave solutions of (\ref{phi}) satisfy the Pohozaev identities
    \begin{eqnarray}
        (1-c^{2}) \Vert \phi_{c}\Vert_{L^{2}}^{2}+(a-bc^{2}) \left\Vert \phi_{c}^{\prime}\right\Vert_{L^{2}}^{2}
            -\left\Vert \phi_{c}\right\Vert_{L^{p+1}}^{p+1} &=& 0 \label{Ph1}\\
        \frac{(1-c^{2})}{2}\Vert \phi_{c}\Vert _{L^{2}}^{2}-\frac{(a-bc^{2})}{2}\Vert \phi_{c}^{\prime }\Vert_{L^{2}}^{2}
            -\frac{1}{p+1}\Vert \phi_{c}\Vert _{L^{p+1}}^{p+1} &=&0. \label{Ph2}
\end{eqnarray}
\end{lemma}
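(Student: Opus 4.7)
The plan is to obtain (\ref{Ph1}) by pairing the ODE (\ref{phi}) against $\phi_c$, and to obtain (\ref{Ph2}) by exploiting the natural first integral of (\ref{phi}). All integrations by parts will be legitimate because the explicit profile (\ref{solitary}) shows that $\phi_c$, $\phi_c'$, and $\phi_c''$ decay exponentially as $|x|\to\infty$, killing every boundary term.

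For (\ref{Ph1}), I would multiply (\ref{phi}) by $\phi_c$ and integrate over $\mathbb{R}$. Integration by parts on the highest-order term gives $\int \phi_c''\phi_c\,dx=-\|\phi_c'\|_{L^2}^2$, and the other two terms are already in the desired form. Collecting yields
\begin{equation*}
-(a-bc^2)\|\phi_c'\|_{L^2}^2 -(1-c^2)\|\phi_c\|_{L^2}^2 + \|\phi_c\|_{L^{p+1}}^{p+1}=0,
\end{equation*}
which is (\ref{Ph1}) after a sign flip.

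For (\ref{Ph2}), I would multiply (\ref{phi}) by $\phi_c'$ and recognize that each of the three resulting terms is a perfect $x$-derivative, so that
\begin{equation*}
\frac{d}{dx}\left[\,\frac{a-bc^2}{2}(\phi_c')^2 \;-\; \frac{1-c^2}{2}\phi_c^2 \;+\; \frac{1}{p+1}|\phi_c|^{p+1}\,\right]=0.
\end{equation*}
Hence the bracketed quantity is constant on $\mathbb{R}$, and the decay of $\phi_c$ and $\phi_c'$ at infinity forces that constant to be zero. This gives a pointwise identity; integrating it over $\mathbb{R}$ and multiplying by $-1$ produces (\ref{Ph2}). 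An equivalent route would be the classical Pohozaev multiplier $x\phi_c'$, using $\int x(f^2)'\,dx=-\int f^2\,dx$ term-by-term, which arrives at the same identity in one pass.

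No genuine obstacle is anticipated; this is a short computation. The only subtlety worth flagging in the write-up is the justification that the boundary contributions from the integrations by parts (and the evaluation of the first integral at $\pm\infty$) vanish, which is immediate from the exponential decay built into (\ref{solitary}).
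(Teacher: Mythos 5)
Your proof is correct. For (\ref{Ph1}) you do exactly what the paper does: multiply (\ref{phi}) by $\phi_c$ and integrate. For (\ref{Ph2}) the paper uses the multiplier $x\phi_c'$ directly, whereas your primary route multiplies by $\phi_c'$ to obtain the first integral of the ODE, concludes that the integration constant vanishes from the decay at infinity, and then integrates the resulting pointwise identity; you correctly flag the $x\phi_c'$ multiplier as an equivalent one-pass alternative, and that is precisely the paper's computation. The two are interchangeable here: your first-integral version in fact yields the slightly stronger pointwise statement $\tfrac{a-bc^2}{2}(\phi_c')^2-\tfrac{1-c^2}{2}\phi_c^2+\tfrac{1}{p+1}|\phi_c|^{p+1}=0$, a luxury of the one-dimensional setting, while the $x\phi_c'$ (scaling) multiplier is the form that survives in higher dimensions. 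Your explicit justification that all boundary terms vanish, via the exponential decay built into (\ref{solitary}), is exactly the point the paper leaves implicit.
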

\begin{proof}
    The first identity is obtained  multiplying (\ref{phi}) by $\phi _{c}$ and then  integrating the resulting equation over $\mathbb{R}$. To obtain the second one we multiply (\ref{phi}) by $x \phi_{c}^{\prime }$ and again integrate.
\end{proof}

To simplify the notation, from now on we will fix $c$ with $c^{2}<1$ and let
\begin{displaymath}
A=1-c^{2},\quad B=a-bc^{2}.
\end{displaymath}
For $u\in H^{1}$ we define two functionals, $P_{1}$ and $P_{2}$, as follows:
\begin{eqnarray}
    P_{1}(u) &=& A\left\Vert u\right\Vert_{L^{2}}^{2}+B\left\Vert u_{x}\right\Vert_{L^{2}}^{2}-\left\Vert u\right\Vert_{L^{p+1}}^{p+1}, \label{p1} \\
    P_{2}(u) &=& \frac{A}{2}\left\Vert u\right\Vert_{L^{2}}^{2}-\frac{B}{2}\left\Vert u_{x}\right\Vert_{L^{2}}^{2}
        -\frac{1}{p+1}\left\Vert u\right\Vert _{L^{p+1}}^{p+1}.   \label{p2}
\end{eqnarray}
From Lemma \ref{lem2.2} we have $P_{1}(\phi_{c})=0$ and $P_{2}(\phi_{c})=0$. Moreover, we note that $P_{1}(u)$ coincides with the functional $2\mathcal{I}_{c}(u) -\mathcal{Q}(u)$ of \cite{erbay-JMAA} (and with $2\mathcal{I}_{\gamma}(u) -\mathcal{Q}(u)$ of \cite{erbay-NA}). As in \cite{erbay-NA} and \cite{erbay-JMAA}, using (\ref{energy}) and (\ref{momentum}) we get the following identity:
\begin{equation}
    E(u, w) +cM(u,w) ={1\over 2}\Vert w+cu\Vert_{L^{2}}^{2}+{b\over 2}\Vert w_{x}+cu_{x}\Vert_{L^{2}}^{2}+V(u), \label{EM}
\end{equation}
where $V(u)$ is defined as
\begin{equation}
    V(u) =\frac{A}{2}\left\Vert u\right\Vert_{L^{2}}^{2}+\frac{B}{2}\left\Vert u_{x}\right\Vert_{L^{2}}^{2}
        -\frac{1}{p+1}\left\Vert u\right\Vert_{L^{p+1}}^{p+1}.                             \label{Vu}
\end{equation}
In what follows,  for the traveling wave solution $u(x, t) =\phi_{c}(x-ct)$,  the corresponding solution of (\ref{sys1})-(\ref{sys2}) will be denoted by $U(x,t)=\Phi_{c}( x-ct)$ in which  $\Phi_{c}(x) =(\phi_{c}(x) ,-c\phi_{c}(x) )$. From (\ref{EM}) and (\ref{Vu}) it follows that
\begin{equation}
    E(\Phi_{c}) +cM(\Phi_{c}) =V(\phi_{c}).  \label{V}
\end{equation}
We now rephrase Lemma 4.1 of \cite{erbay-NA} and Lemma 4.2 of \cite{erbay-JMAA} together as follows:
\begin{lemma}\label{lem2.3}
    $d(c) =\inf \left\{ V(u) : u \in H^{1},~~~~ u\not=0,~~~~ P_{1}(u) =0 \right\} $ is attained at the travelling wave $\phi_{c}$. Moreover
    \begin{displaymath}
     \!\!\!\!\!\!\!\!\!\!\! \!\!\!\!\!\!\!\!\!\!\!
        \inf \left\{ E(U) +cM(U) : U=(u, w)\in H^{1}\times H^{1},~~~~ u\not=0,~~~~ P_{1}(u) =0\right\} =d(c).
    \end{displaymath}
\end{lemma}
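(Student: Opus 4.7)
The strategy is to deduce the second assertion from the first via the algebraic identity (\ref{EM}), and then to establish the first by combining the Pohozaev identity (\ref{Ph1}) for the upper bound with a constrained variational argument borrowed from \cite{erbay-NA,erbay-JMAA} for the lower bound.

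For the second assertion, I will use (\ref{EM}) to write, for any $(u,w)\in H^{1}\times H^{1}$,
\[
E(u,w)+cM(u,w)=\tfrac{1}{2}\Vert w+cu\Vert_{L^{2}}^{2}+\tfrac{b}{2}\Vert w_{x}+cu_{x}\Vert_{L^{2}}^{2}+V(u)\geq V(u),
\]
with equality precisely when $w=-cu$, a function that lies in $H^{1}$ whenever $u$ does. Because the constraint $P_{1}(u)=0$, $u\neq 0$ involves only the first coordinate, one may take the infimum first over $w$ and then over $u$; this collapses $E(U)+cM(U)$ to $V(u)$ on the optimum and shows that the two infima in the lemma coincide.

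To establish $d(c)=V(\phi_{c})$, I would first observe that (\ref{Ph1}) gives $P_{1}(\phi_{c})=0$, so $\phi_{c}$ is admissible and $d(c)\leq V(\phi_{c})$. Using $P_{1}(\phi_{c})=0$ to eliminate the quadratic terms in $V(\phi_{c})$ gives the convenient rewriting $V(\phi_{c})=\tfrac{p-1}{2(p+1)}\Vert\phi_{c}\Vert_{L^{p+1}}^{p+1}>0$. For the reverse inequality I would follow the minimization scheme of \cite{erbay-NA,erbay-JMAA}: as the authors note just after (\ref{p2}), $P_{1}$ coincides (up to the obvious identification) with $2\mathcal{I}_{c}-\mathcal{Q}$ of \cite{erbay-JMAA}, so the constrained problem here is exactly the one treated there. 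A minimizing sequence can be produced by rescaling $u(x)\mapsto u(x/\lambda)$ to force $P_{1}=0$; the scaling balances the $L^{2}$ and $L^{p+1}$ norms against $\Vert u_{x}\Vert_{L^{2}}^{2}$, yielding a uniformly bounded sequence in $H^{1}$. A concentration-compactness argument, after translations, produces a nontrivial weak limit $u^{\ast}\in H^{1}$ with $P_{1}(u^{\ast})=0$ and $V(u^{\ast})=d(c)$. The associated Euler-Lagrange equation, once the Lagrange multiplier is pinned down using (\ref{Ph2}) applied to $u^{\ast}$, reduces to the traveling wave ODE (\ref{phi}); uniqueness (up to translation and sign) of the positive solution (\ref{solitary}) of that ODE identifies $u^{\ast}$ with a translate of $\phi_{c}$, so that $d(c)=V(\phi_{c})$.

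The main technical obstacle, already handled in \cite{erbay-NA,erbay-JMAA}, is the possible loss of compactness for the minimizing sequence on the full line. Vanishing is excluded by a positive lower bound on $\Vert u\Vert_{L^{p+1}}^{p+1}$ along the constraint set (which simultaneously gives $d(c)>0$), and dichotomy is ruled out by a strict subadditivity property of $d(c)$. Once these are secured, the identification of the minimizer with $\phi_{c}$ via the ODE is routine, and the present lemma is essentially a translation of the earlier results into the notation of (\ref{p1})--(\ref{Vu}).
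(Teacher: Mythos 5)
The paper offers no proof of this lemma at all: it is stated verbatim as a rephrasing of Lemma~4.1 of \cite{erbay-NA} and Lemma~4.2 of \cite{erbay-JMAA}, so your reconstruction can only be measured against those sources. Your outline is essentially their argument: the reduction of the second infimum to the first via the identity (\ref{EM}), with equality exactly at $w=-cu$, is precisely what the paper records as (\ref{V}), and the constrained minimization with concentration--compactness is the standard route to $d(c)=V(\phi_{c})$, so the approach is the same. Two small slips are worth correcting. First, a minimizing sequence for the constrained problem already satisfies $P_{1}=0$; if one does want to project an arbitrary $u\neq 0$ onto the constraint set, the amplitude rescaling $u\mapsto\theta u$ always works, whereas the dilation $u(\cdot/\lambda)$ you invoke gives $P_{1}(u(\cdot/\lambda))=\lambda\bigl(A\Vert u\Vert_{L^{2}}^{2}-\Vert u\Vert_{L^{p+1}}^{p+1}\bigr)+\lambda^{-1}B\Vert u_{x}\Vert_{L^{2}}^{2}$ and so cannot reach zero unless $\Vert u\Vert_{L^{p+1}}^{p+1}>A\Vert u\Vert_{L^{2}}^{2}$. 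Second, the Lagrange multiplier is pinned down by pairing the Euler--Lagrange equation with $u^{\ast}$ and using the constraint $P_{1}(u^{\ast})=0$, which forces the multiplier to vanish and yields (\ref{phi}) directly; you cannot ``apply (\ref{Ph2})'' to $u^{\ast}$ at that stage, since that Pohozaev identity is a consequence of the equation you are still trying to derive.
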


For $\alpha\in \mathbb{R}$, we now define a functional, $K_{\alpha}(u)$, as follows:
\begin{eqnarray}
    \!\!\!\!\!\!\!\!\!\!\!\!\!\!\!\!\!\!\!\!\!\!
    K_{\alpha}(u)
    &=&\alpha P_{1}(u) +P_{2}(u)  \nonumber \\
    \!\!\!\!\!\!\!\!\!\!\!\!\!\!\!\!\!\!\!\!\!\!
    &=&\frac{A}{2}(2\alpha +1) \left\Vert u\right\Vert_{L^{2}}^{2}
        +\frac{B}{2}(2\alpha -1) \left\Vert u_{x}\right\Vert_{L^{2}}^{2}
        -(\alpha +\frac{1}{p+1}) \left\Vert u\right\Vert_{L^{p+1}}^{p+1}.  \label{Ku}
\end{eqnarray}
Note that $K_{\alpha}(\phi_{c}) =0$  for all $\alpha$. Consider the family of minimization problems
\begin{displaymath}
    d_{\alpha }(c)=\inf \left\{ V(u) : u\in H^{1},~~~~ u\not=0,~~~~ K_{\alpha}(u) =0\right\} .
\end{displaymath}
Following the scaling idea in \cite{coz}, we prove:
\begin{lemma}\label{lem2.4}
    For every $\alpha >\frac{1}{2}$ we have $d_{\alpha }(c)=d(c)$.
\end{lemma}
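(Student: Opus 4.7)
The plan is to prove $d_\alpha(c) = d(c)$ as two separate inequalities, using a dilation scaling $u_\lambda(x) := u(x/\lambda)$ to transport any point of the constraint $\{K_\alpha = 0\}$ onto the constraint $\{P_1 = 0\}$ without increasing $V$. The inequality $d_\alpha(c) \leq d(c)$ is immediate: by Lemma~\ref{lem2.2} the travelling wave $\phi_c$ satisfies both Pohozaev identities, so $K_\alpha(\phi_c) = \alpha P_1(\phi_c) + P_2(\phi_c) = 0$ and $\phi_c$ is admissible in the minimization defining $d_\alpha(c)$, yielding $d_\alpha(c) \leq V(\phi_c) = d(c)$ by Lemma~\ref{lem2.3}.

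For the reverse inequality, given $u \in H^1 \setminus \{0\}$ with $K_\alpha(u) = 0$, I will construct $v := u_{\lambda_0}$ for a suitable $\lambda_0 > 0$ satisfying $P_1(v) = 0$ and $V(v) \leq V(u)$; Lemma~\ref{lem2.3} will then give $d(c) \leq V(v) \leq V(u)$, and the infimum over admissible $u$ will close the proof. Writing $x := A\|u\|_{L^2}^2$, $y := B\|u_x\|_{L^2}^2 > 0$ (nonzero because $u \neq 0$ in $L^2$), and $z := \|u\|_{L^{p+1}}^{p+1}$, the scaling produces
\[
    P_1(u_\lambda) = (x - z)\lambda + y/\lambda, \qquad V(u_\lambda) = \eta \lambda + \zeta/\lambda,
\]
with $\eta := [(p+1)x - 2z]/[2(p+1)]$ and $\zeta := y/2 > 0$. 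Solving $K_\alpha(u) = 0$ for $z$ in terms of $x$ and $y$ and using $\alpha > 1/2$, $p > 1$ gives the strict inequality $z > x$, so $\lambda_0 := \sqrt{y/(z - x)}$ is well-defined and is the unique positive root of $P_1(u_\lambda) = 0$.

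The main step, and principal obstacle, is verifying $h(\lambda_0) \leq h(1) = V(u)$ for $h(\lambda) := \eta\lambda + \zeta/\lambda$. Let $r := x/y$ and $r^* := (p+3)/(p-1)$. When $\eta \leq 0$, $h$ is strictly decreasing on $(0, \infty)$, and $K_\alpha(u) = 0$ together with $\eta \leq 0$ forces $r \leq (2\alpha-1)/[(p-1)\alpha] < r^*$, which in turn gives $\lambda_0 > 1$, so $h(\lambda_0) < h(1)$ by monotonicity. When $\eta > 0$, $h$ has a unique minimum at $\lambda^* := \sqrt{\zeta/\eta}$, is strictly decreasing on $(0, \lambda^*]$, and strictly increasing on $[\lambda^*, \infty)$. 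Direct algebraic manipulation using $K_\alpha(u) = 0$ establishes the two equivalences $\lambda_0 \geq 1 \iff r \leq r^*$ and $\lambda_0 \leq \lambda^* \iff r \leq r^*$, so $\lambda_0$ always lies between $1$ and $\lambda^*$: if $r \leq r^*$ then $1 \leq \lambda_0 \leq \lambda^*$ and both belong to the decreasing branch of $h$, whereas if $r \geq r^*$ then $\lambda^* \leq \lambda_0 \leq 1$ and both belong to the increasing branch. In either case $h(\lambda_0) \leq h(1)$, as required.

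The crucial algebraic fact underpinning the whole argument is that the threshold $r^* = (p+3)/(p-1)$ is exactly the ratio $A\|\phi_c\|_{L^2}^2 / B\|\phi_c'\|_{L^2}^2$ dictated by Lemma~\ref{lem2.2} for the soliton; this coincidence is what causes both ``$\lambda_0 \geq 1$'' and ``$\lambda_0 \leq \lambda^*$'' to flip at the very same value of $r$, ensuring that the scaling construction succeeds uniformly. Equality $h(\lambda_0) = h(1)$ holds precisely when $r = r^*$ (with $\lambda_0 = 1$), consistent with $\phi_c$ realizing the infimum in both minimization problems.
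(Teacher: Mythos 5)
Your proof is correct, and it follows the same overall strategy as the paper---rescale a point of the constraint set $\{K_{\alpha}=0\}$ onto $\{P_{1}=0\}$ without increasing $V$, then invoke Lemma~\ref{lem2.3}---but the scaling you use, and hence the heart of the verification, is genuinely different. The paper follows the idea of \cite{coz} and uses the $\alpha$-dependent two-parameter family $u_{\lambda}(x)=\lambda^{\alpha}u(x/\lambda)$, which is tuned to the constraint: one computes $\frac{d}{d\lambda}V(u_{\lambda})\big|_{\lambda=1}=K_{\alpha}(u)=0$, and a single sign-change argument for $\frac{d}{d\lambda}V(u_{\lambda})$ shows that $V(u_{\lambda})$ attains its global maximum on $(0,\infty)$ at $\lambda=1$; consequently \emph{any} $\lambda_{0}$ with $P_{1}(u_{\lambda_{0}})=0$ (which exists because $P_{1}(u_{\lambda})$ changes sign) gives $V(u)\geq V(u_{\lambda_{0}})\geq d(c)$, with no need to locate $\lambda_{0}$. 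Your pure dilation $u(x/\lambda)$ is independent of $\alpha$ and reduces everything to the elementary functions $P_{1}(u_{\lambda})=-(z-x)\lambda+y/\lambda$ and $h(\lambda)=\eta\lambda+\zeta/\lambda$, at the cost of an explicit case analysis ($\eta\leq 0$ versus $\eta>0$) to check $h(\lambda_{0})\leq h(1)$. I checked the key computations---$z>x$ follows from $K_{\alpha}(u)=0$ together with $\alpha>\tfrac12$ and $p>1$, and both equivalences $\lambda_{0}\geq 1\iff r\leq r^{*}$ and (for $\eta>0$) $\lambda_{0}\leq\lambda^{*}\iff r\leq r^{*}$ with $r^{*}=\frac{p+3}{p-1}$ are correct---so the argument closes. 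Two incidental advantages of your route: it handles $P_{1}(u)=0$ uniformly (then $\lambda_{0}=1$) rather than as a separate case as in the paper, and it makes visible the structural reason the construction succeeds, namely that $r^{*}$ is precisely the ratio $A\Vert\phi_{c}\Vert_{L^{2}}^{2}/B\Vert\phi_{c}'\Vert_{L^{2}}^{2}$ forced on the soliton by the Pohozaev identities of Lemma~\ref{lem2.2}. The paper's scaling, by contrast, avoids all root-locating algebra because maximality at $\lambda=1$ is built into the choice of exponent.
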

\begin{proof}
    Since $K_{\alpha }(\phi_{c}) =0$ we have $d_{\alpha}(c)\leq V(\phi_{c}) =d(c) $. For the converse, we take some $u\not=0$ with $K_{\alpha }(u) =0$.  If $P_{1}(u) =0$, then by Lemma \ref{lem2.3} we have $V(u) \geq d(c) $. We now turn to the case $P_{1}(u) \not=0$. For $\lambda >0$ we let $u_{\lambda}(x, t) =\lambda ^{\alpha }u\left( \frac{x}{\lambda },t\right)$. Substituting $u_{\lambda}$ into (\ref{p1}) yields
    \begin{eqnarray*}
        P_{1}(u_{\lambda})
        &=& A\lambda^{2\alpha +1}\left\Vert u\right\Vert_{L^{2}}^{2}+B\lambda^{2\alpha -1}\left\Vert u_{x}\right\Vert_{L^{2}}^{2}
            -\lambda^{\alpha (p+1)+1}\left\Vert u\right\Vert_{L^{P+1}}^{p+1} \\
        &=&\lambda^{2\alpha -1}\left(A\lambda^{2}\left\Vert u\right\Vert_{L^{2}}^{2}+B\left\Vert u_{x}\right\Vert_{L^{2}}^{2}
            -\lambda^{\alpha (p-1)+2}\left\Vert u\right\Vert_{L^{P+1}}^{p+1}\right),
    \end{eqnarray*}
    from which it follows that $P_{1}(u_{\lambda})$ is positive for small $\lambda$ but negative for large $\lambda$. Hence there is some $\lambda_{0}$ for which $P_{1}(u_{\lambda_{0}})=0$. Thus, by Lemma \ref{lem2.3}, we have  $V(u_{\lambda_{0}}) \geq d(c)$. On the other hand, computation of  $V(u)$ at  $u_{\lambda}$ gives
    \begin{displaymath}
        V\left( u_{\lambda }\right)
        =\frac{A}{2}\lambda ^{2\alpha +1}\left\Vert u\right\Vert _{L^{2}}^{2}+\frac{B}{2}\lambda ^{2\alpha -1}\left\Vert u_{x}\right\Vert_{L^{2}}^{2}
            -\frac{1}{p+1}\lambda^{\alpha (p+1)+1}\left\Vert u\right\Vert_{L^{P+1}}^{p+1}.
    \end{displaymath}
    Differentiating this we get
    \begin{eqnarray*}
        \frac{dV\left( u_{\lambda }\right) }{d\lambda }
        &=&\frac{A}{2}(2\alpha +1) \lambda^{2\alpha}\left\Vert u\right\Vert_{L^{2}}^{2}
            +\frac{B}{2}(2\alpha -1) \lambda^{2\alpha -2}\left\Vert u_{x}\right\Vert_{L^{2}}^{2}  \\
        &~& ~
            -\frac{\alpha (p+1)+1}{p+1}\lambda^{\alpha (p+1) }\left\Vert u\right\Vert _{L^{P+1}}^{p+1} \\
        &=& \lambda^{2\alpha -2} g(\lambda)
    \end{eqnarray*}
    with
    \begin{displaymath}
    \!\!\!\!\!\!\!\!
        g(\lambda) =\frac{A}{2}(2\alpha +1) \lambda^{2}\left\Vert u\right\Vert_{L^{2}}^{2}
            +\frac{B}{2}(2\alpha-1)\left\Vert u_{x}\right\Vert_{L^{2}}^{2}
            -\frac{\alpha (p+1) +1}{p+1}\lambda^{\alpha (p-1) +2}\left\Vert u\right\Vert_{L^{P+1}}^{p+1}.
    \end{displaymath}
    It is easy to se that $g^{\prime }(\lambda)$ changes sign from positive to negative exactly once on $(0,\infty )$. We observe that when $2\alpha -1>0$,   the function $g(\lambda)$ is positive for small $\lambda$ but negative for large $\lambda$. Hence we conclude that  $g(\lambda)$ changes its sign exactly once on $(0, \infty )$. The same conclusion holds  for $\frac{d}{d\lambda }V( u_{\lambda })$. This in turn shows that $V(u_{\lambda})$ attains its global maximum at exactly one point in $(0,\infty )$. Moreover
    \begin{eqnarray*}
        \!\!\!\!\!\!\!\!\!\!\!\!\!
        \frac{dV(u_{\lambda})}{d\lambda} \mid_{\lambda =1}
        &=&\frac{A}{2}(2\alpha +1) \left\Vert u\right\Vert_{L^{2}}^{2}
            +\frac{B}{2}(2\alpha -1)  \left\Vert u_{x}\right\Vert_{L^{2}}^{2}
            -\frac{\alpha (p+1) +1}{p+1}\left\Vert u\right\Vert_{L^{P+1}}^{p+1} \\
        \!\!\!\!\!\!\!\!\!\!\! \!\!\!\!\!\!\!\!\!\!\!
        &=& K_{\alpha }(u) =0,
    \end{eqnarray*}%
    so that the maximum is attained at $\lambda =1$. This means $V(u) \geq V(u_{\lambda_{0}}) \geq d(c)$. So we have $d_{\alpha}(c)\geq d(c)$. This completes the proof.
\end{proof}

We now let
\begin{displaymath}
    \widetilde{\Sigma}_{\alpha }=\left\{ U\in H^{1}\times H^{1}:E(U) +cM(U) <d(c),~~~~ K_{\alpha}(u) <0 \right\}.
\end{displaymath}
\begin{lemma}\label{lem2.5}
    Let $\ \alpha >{1\over 2}$. Then $\widetilde{\Sigma}_{\alpha}$ is invariant under the flow defined by the Cauchy problem (\ref{sys1})-(\ref{sys3}).
\end{lemma}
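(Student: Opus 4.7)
The plan is to take arbitrary initial data $U_{0}=(u_{0},w_{0})\in \widetilde{\Sigma}_{\alpha}$ with corresponding local solution $U(t)=(u(t),w(t))\in C([0,T),H^{1}\times H^{1})$ from Theorem \ref{theo2.1}, and check that the two defining conditions persist for every $t\in[0,T)$. The condition $E(U(t))+cM(U(t))<d(c)$ is immediate: since $E$ and $M$ are conserved along the flow, $E(U(t))+cM(U(t))=E(U_{0})+cM(U_{0})<d(c)$. So the entire burden lies in propagating $K_{\alpha}(u(t))<0$.

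I would argue by contradiction using continuity. Since $u\in C([0,T),H^{1})$ and $K_{\alpha}:H^{1}\to\mathbb{R}$ is continuous (the cubic-plus-power structure of $K_{\alpha}$ is controlled by the Sobolev embedding $H^{1}\hookrightarrow L^{p+1}$ on $\mathbb{R}$), the map $t\mapsto K_{\alpha}(u(t))$ is continuous. If $K_{\alpha}(u(t))$ failed to be negative for some $t\in[0,T)$, the intermediate value theorem would produce a smallest $t_{1}>0$ with $K_{\alpha}(u(t_{1}))=0$, while $K_{\alpha}(u(t))<0$ for $t\in[0,t_{1})$.

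The next step is to rule out the possibility $u(t_{1})=0$. Because $c^{2}<1\leq a/b$, both $A$ and $B$ are positive, and for $\alpha>\tfrac{1}{2}$ both coefficients $\tfrac{A}{2}(2\alpha+1)$ and $\tfrac{B}{2}(2\alpha-1)$ in (\ref{Ku}) are strictly positive. Combining this with the Sobolev estimate $\Vert u\Vert_{L^{p+1}}^{p+1}\leq C\Vert u\Vert_{H^{1}}^{p+1}$ and using $p+1>2$ gives a bound of the form $K_{\alpha}(u)\geq c_{1}\Vert u\Vert_{H^{1}}^{2}-c_{2}\Vert u\Vert_{H^{1}}^{p+1}$, so there is $\delta>0$ with $K_{\alpha}(u)>0$ whenever $0<\Vert u\Vert_{H^{1}}<\delta$. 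Since $K_{\alpha}(u(t))<0$ on $[0,t_{1})$, this forces $\Vert u(t)\Vert_{H^{1}}\geq\delta$ there, and by continuity $\Vert u(t_{1})\Vert_{H^{1}}\geq\delta$, so $u(t_{1})\neq 0$.

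Now Lemma \ref{lem2.4} applies: $u(t_{1})$ is a nonzero $H^{1}$ function with $K_{\alpha}(u(t_{1}))=0$, so $V(u(t_{1}))\geq d_{\alpha}(c)=d(c)$. Invoking the identity (\ref{EM}) at time $t_{1}$ and dropping the two nonnegative $L^{2}$ terms,
\begin{equation*}
E(U(t_{1}))+cM(U(t_{1}))\geq V(u(t_{1}))\geq d(c),
\end{equation*}
which contradicts the already-established $E(U(t_{1}))+cM(U(t_{1}))<d(c)$. I expect the only delicate point is the exclusion of $u(t_{1})=0$; it relies crucially on the positivity of both quadratic coefficients in $K_{\alpha}$, which uses simultaneously $\alpha>\tfrac{1}{2}$ and $c^{2}<1$. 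Everything else is conservation of $E+cM$, continuity of $K_{\alpha}$ on $H^{1}$, and the variational characterization of $d(c)$ furnished by Lemma \ref{lem2.4}.
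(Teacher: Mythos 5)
Your proof is correct and follows essentially the same route as the paper's: conservation of $E+cM$, continuity of $t\mapsto K_{\alpha}(u(t))$ producing a first time $t_{1}$ with $K_{\alpha}(u(t_{1}))=0$, then Lemma \ref{lem2.4} together with the identity (\ref{EM}) to force $E(U_{0})+cM(U_{0})\geq d(c)$, a contradiction. The only difference is that you explicitly rule out $u(t_{1})=0$ via the positivity of both quadratic coefficients of $K_{\alpha}$ and the Sobolev embedding --- a small point the paper's proof leaves implicit, and which you handle correctly.
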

\begin{proof}
    Suppose $U_{0}\in \widetilde{\Sigma }_{\alpha }$ and let $U(t)$ be the solution of (\ref{sys1})-(\ref{sys3}) with initial value $U_{0}$. Since $E$ and $M$ are conserved quantities, then $E(U(t)) +cM(U(t)) <d(c)$. Assume that $U(t)$ does not stay in $\widetilde{\Sigma}_{\alpha}$. Then there is some $t_{1}$ for which  $K_{\alpha}(u(t_{1})) =0$. Thus, by Lemma  \ref{lem2.4}, we get  $E(U_{0}) +cM(U_{0})=E(U(t_{1})) +cM(U(t_{1})) \geq V(u(t_{1})) \geq d(c)$ implying that $U_{0}$ is not in $\widetilde{\Sigma }_{\alpha }$, which is a contradiction.
\end{proof}

\setcounter{equation}{0}
\section{Instability of Traveling Waves}
\noindent

We first  compute $d(c)$ and some related quantities.  It follows from  (\ref{Vu}) and  Lemma \ref{lem2.3} that
    \begin{eqnarray}
        d(c)
        &=& V(\phi_{c})  \nonumber \\
        &=& \frac{A}{2}\left\Vert \phi_{c}\right\Vert_{L^{2}}^{2}
            +\frac{B}{2}\left\Vert \phi_{c}^{\prime }\right\Vert_{L^{2}}^{2}-\frac{1}{p+1}\left\Vert \phi_{c}\right\Vert_{L^{p+1}}^{p+1}.
    \end{eqnarray}%
Using the Pohozaev identities, (\ref{Ph1}) and (\ref{Ph2}), in this equation yields
    \begin{equation}
    d(c)=A \left({{p-1}\over{p+3}}\right) \left\Vert \phi_{c}\right\Vert_{L^{2}}^{2}.    \label{ddddd}
    \end{equation}
    We observe from (\ref{solitary}) that $\phi _{c}$ and $\phi _{0}$ are related  through the scaling:
    \begin{displaymath}
    \phi_{c}(x)=A^{\frac{1}{p-1}}\phi_{0}(a^{\frac{1}{2}}A^{\frac{1}{2}}B^{-\frac{1}{2}}x),
    \end{displaymath}
    so that
    \begin{displaymath}
        \left\Vert \phi_{c}\right\Vert_{L^{2}}^{2}
        = a^{-\frac{1}{2}}A^{\frac{5-p}{2p-2}}B^{\frac{1}{2}}\left\Vert \phi_{0}\right\Vert_{L^{2}}^{2}.
    \end{displaymath}
Substituting this into (\ref{ddddd}) we obtain
    \begin{equation}
     d(c)=  a^{-\frac{1}{2}}(1-c^{2})^{\frac{p+3}{2(p-1)}}(a-bc^{2})^{\frac{1}{2}} d(0),  \label{dcc}
   \end{equation}
 where
   \begin{displaymath}
    d(0) =\frac{p-1}{p+3}\left\Vert \phi_{0}\right\Vert_{L^{2}}^{2}>0.
    \end{displaymath}

Our main result is the following theorem showing that  traveling waves with $c^{2}<c_{0}^{2}$ are unstable by blow-up in a finite time.
\begin{theorem}
    Suppose $c^{2}<c_{0}^{2}$ where $c_{0}^{2}$ is given by (\ref{thresh3}), and  $\phi_{c}$ is a traveling wave solution of (\ref{dde0}) with velocity $c$. Let $\Phi_{c}=(\phi_{c}, -c\phi_{c})$ be the corresponding solution of (\ref{sys1})-(\ref{sys2}). There exists initial data $U_{0}$ arbitrarily close to $\Phi_{c}$ in $H^{1}\times H^{1}$ such that the $H^{1}\times H^{1}$ norm of the solution $U(t) =(u(t), w(t))$ of (\ref{sys1})-(\ref{sys3}) blows up in finite time.
\end{theorem}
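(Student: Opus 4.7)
The plan is to combine the invariant-set property of Lemma \ref{lem2.5} with a Levine-type concavity (virial) argument, following the strategy used for the Boussinesq equation in \cite{liu1993,todorova}. The first step is to pick the one-parameter family of perturbed initial data $U_{0}^{\lambda}=(\lambda\phi_{c},-c\lambda\phi_{c})$ with $\lambda>1$ close to $1$ and verify that $U_{0}^{\lambda}\in\widetilde{\Sigma}_{\alpha}$ for every $\alpha>1/2$. The Pohozaev identities of Lemma \ref{lem2.2} give
\[
(E+cM)(U_{0}^{\lambda})=\frac{d(c)}{p-1}\bigl[(p+1)\lambda^{2}-2\lambda^{p+1}\bigr],
\]
which is strictly less than $d(c)$ for every $\lambda\neq 1$; a parallel scaling argument, using $K_{\alpha}(\phi_{c})=0$ together with $(d/d\lambda)K_{\alpha}(\lambda\phi_{c})|_{\lambda=1}<0$ for $\alpha>1/2$, shows $K_{\alpha}(\lambda\phi_{c})<0$ for $\lambda>1$. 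By Lemma \ref{lem2.5}, the corresponding solution $U(t)=(u(t),w(t))$ then remains in $\widetilde{\Sigma}_{\alpha}$ for as long as it exists.

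Arguing by contradiction, I would assume the solution is global with a uniform $H^{1}\times H^{1}$ bound and introduce the virial functional
\[
F(t)=\|v(\cdot,t)\|_{L^{2}}^{2}+b\|u(\cdot,t)\|_{L^{2}}^{2}=\langle v,(I-bD_{x}^{2})v\rangle,
\]
where $v$ is an appropriately regularized primitive of $u$ satisfying $v_{t}=w$. A direct computation using (\ref{sys1})--(\ref{sys2}) and integration by parts yields $F'(t)=2\langle(I-bD_{x}^{2})v,w\rangle$ and
\[
F''(t)=2\bigl(\|w\|_{L^{2}}^{2}+b\|w_{x}\|_{L^{2}}^{2}\bigr)-2\|u\|_{L^{2}}^{2}-2a\|u_{x}\|_{L^{2}}^{2}+2\|u\|_{L^{p+1}}^{p+1},
\]
while Cauchy--Schwarz in the $(I-bD_{x}^{2})$-weighted inner product gives $(F'(t))^{2}\leq 4F(t)\bigl(\|w\|_{L^{2}}^{2}+b\|w_{x}\|_{L^{2}}^{2}\bigr)$.

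Substituting the identity $\|w\|^{2}+b\|w_{x}\|^{2}=2E-\|u\|^{2}-a\|u_{x}\|^{2}+\tfrac{2}{p+1}\|u\|_{L^{p+1}}^{p+1}$ from conservation of $E$ into $F''$ and choosing the Levine exponent $\sigma=(p-1)/4$, one derives
\[
F(t)F''(t)-\tfrac{p+3}{4}(F'(t))^{2}\geq F(t)\bigl[-2(p+1)E+(p-1)(\|u\|^{2}+a\|u_{x}\|^{2})\bigr].
\]
Using the invariant-set estimate $K_{\alpha}(u(t))<0$ (with $\alpha>1/2$ chosen optimally) to replace $\|u\|_{L^{p+1}}^{p+1}$ by a lower bound in the Sobolev norms, together with conservation of $E+cM<d(c)$, the bracketed quantity reduces to a coercive quadratic form in $\|u\|_{L^{2}}$ and $\|u_{x}\|_{L^{2}}$ precisely when $b(p+3)c^{4}-2a(p+1)c^{2}+a(p-1)>0$; this latter condition is equivalent to $c^{2}<c_{0}^{2}$, with $c_{0}^{2}$ the smaller positive root of the quadratic. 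The resulting inequality $FF''\geq (1+\sigma)(F')^{2}$ then implies via Levine's lemma that the nonnegative concave function $F^{-\sigma}$ must vanish in finite time, so $F(t)\to+\infty$ in finite time, contradicting the assumed boundedness. Hence $\|U(t)\|_{H^{1}\times H^{1}}$ must blow up in finite time.

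The principal obstacle is the algebraic reduction in the third step: combining (i) conservation of $E+cM$ together with $E+cM<d(c)$, (ii) the invariant-set estimate $K_{\alpha}(u(t))<0$ with the free parameter $\alpha>1/2$ to be optimized, and (iii) the Cauchy--Schwarz bound on $(F')^{2}$, so that the differential inequality becomes a sharp Levine-type concavity inequality precisely under the threshold $c^{2}<c_{0}^{2}$. A secondary technical point is making rigorous sense of $F(t)$ when the primitive $v$ does not decay at both ends (as happens because $\phi_{c}$ has nonzero mean), which can be handled either by restricting to initial perturbations with a zero-mean component or by subtracting a fixed smooth profile carrying the mean of $\phi_{c}$.
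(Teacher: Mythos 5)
Your overall skeleton (perturb to $\lambda\Phi_{c}$ with $\lambda>1$, verify membership in $\widetilde{\Sigma}_{\alpha}$, invoke Lemma \ref{lem2.5}, then run a Levine concavity argument on $\|v\|_{L^{2}}^{2}+b\|u\|_{L^{2}}^{2}$) is exactly the paper's, and your first step is correct. But the core differential inequality you propose does not close, and the gap is precisely the ``algebraic reduction'' you flagged. By using plain Cauchy--Schwarz against $\|w\|_{L^{2}}^{2}+b\|w_{x}\|_{L^{2}}^{2}$ and eliminating $\|u\|_{L^{p+1}}^{p+1}$ with $E$ alone, you arrive at the requirement that $(p-1)\bigl(\|u\|_{L^{2}}^{2}+a\|u_{x}\|_{L^{2}}^{2}\bigr)-2(p+1)E(U_{0})\geq 0$ along the flow. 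Evaluate this at the borderline configuration $u=\phi_{c}$, $U_{0}=\Phi_{c}$: using the Pohozaev identities one finds the expression equals $0$ at $c=0$ and its derivative in $c^{2}$ at $c^{2}=0$ is strictly negative, so for $c\neq 0$ and $\lambda-1$ small (which is forced, since the data must be arbitrarily close to $\Phi_{c}$) your bracket is already negative at $t=0$. Note also that $c$ never enters your bracket structurally, so there is no mechanism by which the threshold $b(p+3)c^{4}-2a(p+1)c^{2}+a(p-1)>0$ could emerge from it; asserting that it does is the missing (and in fact false) step.

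The fix, which is what the paper does, is to exploit the free orthogonality $\langle v,u\rangle=\langle v,v_{x}\rangle=0$ to write $H'=\langle v,w+cu\rangle+b\langle u,w_{x}+cu_{x}\rangle$ and apply Cauchy--Schwarz against the \emph{shifted} quantity $\|w+cu\|_{L^{2}}^{2}+b\|w_{x}+cu_{x}\|_{L^{2}}^{2}$. Completing the square in $H''$ accordingly replaces $E(U_{0})$ by $E(U_{0})+cM(U_{0})<d(c)$ (the quantity actually controlled by the invariant set), produces the extra term $-2cM(U_{0})$ (bounded below by an explicit positive multiple of $d(c)$), and leaves the quadratic form $J_{c}(u)$ of (\ref{jc}) with the $c$-dependent coefficients $1-c^{2}\tfrac{p+3}{p-1}$ and $a-bc^{2}\tfrac{p+3}{p-1}$. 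It is the representation $J_{c}(u)=C\bigl[V(u)-\tfrac{1}{\alpha(p+1)+1}K_{\alpha}(u)\bigr]$ with the specific $\alpha$ of (\ref{alp}) — together with the requirement $\alpha>\tfrac12$ needed to invoke Lemma \ref{lem2.4} — that generates your quadratic condition and hence $c^{2}<c_{0}^{2}$; the constants then conspire to give $H''\geq(p+1)\delta$ plus exactly the shifted quadratic term absorbed by Cauchy--Schwarz. A secondary point: the primitive issue cannot be handled by ``subtracting a profile carrying the mean,'' since $\int\phi_{c}\,dx\neq 0$ means $\lambda\phi_{c}$ has no $L^{2}$ primitive at all; the paper genuinely modifies the initial data by cutting off the frequencies $|\xi|<h$ of $\lambda\widehat{\phi_{c}}/(i\xi)$ and then takes $U_{0}=((v_{0})_{x},-c(v_{0})_{x})$, recovering membership in $\widetilde{\Sigma}_{\alpha}$ and the strict inequalities by continuity as $h\to 0$, $\lambda\to 1^{+}$.
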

\begin{proof}
     We  consider the solution $\Phi_{c}=(\phi_{c}, -c\phi _{c}) $ of (\ref{sys1})-(\ref{sys2}), corresponding to the traveling wave solution $\phi_{c}$. For $\lambda >1$, we let
    \begin{eqnarray*}
        h(\lambda)&=& E(\lambda \Phi_{c}) +cM(\lambda \Phi_{c})=V(\lambda \phi_{c})  \\
        &=& \frac{1}{2}\left(A\left\Vert \phi_{c}\right\Vert_{L^{2}}^{2}+B\left\Vert \phi_{c}^{\prime}\right\Vert_{L^{2}}^{2}\right)\lambda ^{2} -\frac{1}{p+1}\left\Vert \phi_{c}\right\Vert_{L^{p+1}}^{p+1}\lambda^{p+1},
    \end{eqnarray*}
    where we have used (\ref{Vu}) and (\ref{V}). The function $h(\lambda)$ has a local maximum at
     \begin{displaymath}
    \lambda_{\max} =\left( { {A\left\Vert \phi_{c}\right\Vert_{L^{2}}^{2}+B\left\Vert \phi_{c}^{\prime}\right\Vert_{L^{2}}^{2}}\over
        {\left\Vert \phi_{c}\right\Vert_{L^{p+1}}^{p+1}}}\right)^{{1}\over {p-1}}.
    \end{displaymath}
    The Pohozaev identity (\ref{Ph1}) implies that $\lambda_{\max}=1$. Then, for $\lambda >1$ ($\lambda$ near 1) we have
    \begin{equation}
    E(\lambda \Phi_{c}) +cM(\lambda \Phi_{c})< V(\phi_{c})=d(c).
    \end{equation}
    As $\lambda^{p+1}>\lambda^{2}$, using (\ref{Ku}) we get
    \begin{eqnarray*}
        \!\!\!\!
        K_{\alpha }(\lambda \phi_{c})
        &=&\lambda^{2}\frac{A}{2}(2\alpha +1)\left\Vert \phi_{c}\right\Vert_{L^{2}}^{2}
            +\lambda^{2}\frac{B}{2}(2\alpha-1)\left\Vert \phi_{c}^{\prime}\right\Vert_{L^{2}}^{2}
            -\lambda^{p+1}(\alpha +\frac{1}{p+1}) \left\Vert \phi_{c}\right\Vert_{L^{p+1}}^{p+1} \\
        \!\!\!\!\!\!\!\!\!\!\!\!\!\!\!\!\!\!\!\!\!\!\!\!\!\!
        &<& \lambda^{2}K_{\alpha}(\phi_{c})=0.
    \end{eqnarray*}%
    The above two results imply that  $\lambda \Phi_{c}\in \widetilde{\Sigma}_{\alpha}$. We now choose a function $v_{0}$ such that
    \begin{displaymath}
        \widehat{v_{0}}(\xi)= \left\{
            \begin{array}{lll}
            \frac{1}{i\xi}\lambda\widehat{\phi_{c}}(\xi)  &  ~~\mbox{for}~~~ \left\vert \xi \right\vert \geq h>0, \\
             0                                           &  ~~\mbox{for}~~~  \left\vert \xi \right\vert <h
             \end{array}
             \right.
    \end{displaymath}
    and set  $U_{0}=((v_{0})_{x}, -c(v_{0})_{x})$. We note that $\left\Vert U_{0}-\Phi_{c}\right\Vert_{H^{1}\times H^{1}}$  can be made arbitrarily small by choosing $\lambda -1$ and $h$ sufficiently small. Thus, by continuity of the functionals, we get that $U_{0}\in \widetilde{\Sigma}_{\alpha}$. By Lemma \ref{lem2.5} it follows that the solution of (\ref{sys1})-(\ref{sys3})  with initial value $U_{0}$ stays in  $ \widetilde{\Sigma}_{\alpha}$ as long as it exists: $U(t) =(u(t), w(t)) \in \widetilde{\Sigma}_{\alpha}$.  Also, using (\ref{momentum}), (\ref{Ph1}), (\ref{Ph2}) and $d(c)= V(\phi_{c})$ we obtain
    \begin{eqnarray*}
        -2cM(\Phi_{c})  &=& -2cM(\phi_{c}, -c\phi_{c})
        =2c^{2}(\left\Vert \phi_{c}\right\Vert_{L^{2}}^{2}+b\left\Vert \phi_{c}^{\prime}\right\Vert_{L^{2}}^{2})  \\
        &=& 2c^{2}\left[ 1+\frac{b(p-1)(1-c^{2})}{(p+3)(a-bc^{2})}\right] \left\Vert \phi_{c}\right\Vert_{L^{2}}^{2} \\
        &=&\frac{2c^{2}}{1-c^{2}}\left[ 1+\frac{b(p-1)(1-c^{2})}{(p+3)(a-bc^{2})}\right] \left(\frac{p+3}{p-1}\right)d(c) .
    \end{eqnarray*}%
    Consequently, for $\lambda >1$, we have
    \begin{displaymath}
    \!\!\!\!\!\!\!\!\!\!\!\!\!\!\!\!\!\!\!\!\!\!
        -2cM(\lambda \Phi_{c}) >-2cM(\Phi_{c}) =\frac{2c^{2}}{1-c^{2}}\left[1+\frac{b(p-1)(1-c^{2})}{(p+3)(a-bc^{2})}\right] \left(\frac{p+3}{p-1}\right)d(c).
    \end{displaymath}
    Again, by continuity, this leads to the following estimate that will be used later:
    \begin{equation}
        -2cM(U_{0}) > \frac{2c^{2}}{1-c^{2}}\left[ 1+\frac{b(p-1)(1-c^{2})}{(p+3)(a-bc^{2})}\right] \left(\frac{p+3}{p-1}\right)d(c) . \label{mmm}
    \end{equation}
   We now define
    \begin{displaymath}
    H(t) =\frac{1}{2}\left(\left\Vert v(t) \right\Vert_{L^{2}}^{2}+b\left\Vert u(t) \right\Vert_{L^{2}}^{2}\right),
    \end{displaymath}
     where $v$ is defined as
    \begin{displaymath}
        v(t) =v_{0}+\int_{0}^{t}w(\tau) d\tau .
    \end{displaymath}
    Note that, due to (\ref{sys1}),   $u=v_{x}$ and $w=$ $v_{t}$.
    We will now show that $H(t)$ blows up in finite time. As in \cite{erbay-NA}, this will ensures that the solution $U(t)$ will blow up in $H^{1}\times H^{1}$  in finite time. To this end we employ Levine's Lemma \cite{levine} and start by estimating $H^{\prime \prime }(t)$. For convenience we suppress the dependencies on $t$ from now on.    Since $v_{t}=w$ and $u_{t}=w_{x}$, using (\ref{sys2}) we get
    \begin{eqnarray}
        && H^{\prime }=\langle v, w\rangle+b \langle u, w_{x}\rangle,  \label{Hfirst}   \\
        && H^{\prime\prime}= \left\Vert w \right\Vert_{L^{2}}^{2}+b\left\Vert w_{x} \right\Vert_{L^{2}}^{2}
           - \left\Vert u \right\Vert_{L^{2}}^{2}-a\left\Vert u_{x} \right\Vert_{L^{2}}^{2}+\left\Vert u \right\Vert_{L^{p+1}}^{p+1}.  \label{Hsecond}
    \end{eqnarray}%
    From the energy conservation we have
    \begin{displaymath}
        \!\!\!\!\!\!\!\!\!\!\!\!\!\!\!\!\!\!\!\!\!\!
        E(U)=\frac{1}{2} \left( \left\Vert w \right\Vert_{L^{2}}^{2} +b   \left\Vert w_{x} \right\Vert_{L^{2}}^{2}
        +\left\Vert u \right\Vert_{L^{2}}^{2}+a\left\Vert u_{x} \right\Vert_{L^{2}}^{2}\right)-\frac{1}{p+1}\left\Vert u \right\Vert_{L^{p+1}}^{p+1}=E(U_{0}).
    \end{displaymath}
    Eliminating $\left\Vert u \right\Vert_{L^{p+1}}^{p+1}$  in (\ref{Hsecond}) we get
    \begin{eqnarray}
        H^{\prime \prime }(t)
        &=&\frac{p+3}{2}\left(\Vert w+cu\Vert_{L^{2}}^{2}+b\Vert w_{x}+cu_{x}\Vert_{L^{2}}^{2}\right)-2cM(U_{0}) \nonumber  \\
        && -(p+1) [E(U_{0})+c M(U_{0})] +J_{c}(u), \label{hprime}
    \end{eqnarray}%
    where
    \begin{equation}
        \!\!\!\!\!\!\!\!\!\!\!\!\!\!\!\!\!\!\!\!\!\!
        J_{c}(u) = {{p-1}\over 2}\left\{ \left[1-c^{2}\left(\frac{p+3}{p-1}\right)\right]\Vert u\Vert_{L^{2}}^{2}
                   +\left[a-bc^{2}\left(\frac{p+3}{p-1}\right)\right]\Vert u_{x}\Vert_{L^{2}}^{2}\right\}.    \label{jc}
    \end{equation}%
     To control $J_{c}(u)$ we first  claim that there are  constants $\alpha > {1\over 2} $  and $C>0$ such that
    \begin{displaymath}
        J_{c}(u)
        = C\left[ V(u)-\frac{1}{\alpha (p+1)+1}K_{\alpha }(u) \right] .
    \end{displaymath}%
    Note that the coefficient of $K_{\alpha }(u) $ is chosen so that the  term  $\left\Vert u \right\Vert_{L^{p+1}}^{p+1}$ disappears.  We then have
    \begin{eqnarray}
    &&  \!\!\!\!\!\!\!\!\!\!\!\!\!\!\!\!\!\!\!\!\!\!
      V(u)-\frac{1}{\alpha (p+1)+1}K_{\alpha }(u) \nonumber   \\
    && \!\!\!\!\!\!\!\!\!
        =\frac{(1-c^{2})}{2}\left(\frac{\alpha (p-1)}{\alpha (p+1)+1}\right)\Vert u\Vert_{L^{2}}^{2}+\frac{a-bc^{2}}{2}\left(\frac{\alpha (p-1)+2} {\alpha(p+1)+1}\right)\Vert u_{x}\Vert_{L^{2}}^{2}  \nonumber\\
    && \!\!\!\!\!\!\!\!\! ={1\over C} \left\{ \frac{p-1}{2}\left[1-c^{2}\left(\frac{p+3}{p-1}\right)\right]\Vert u\Vert_{L^{2}}^{2} \right. \nonumber \\
    && \left. +\frac{(a-bc^{2})}{2(1-c^{2})\alpha }\left[1-c^{2}\left(\frac{p+3}{p-1}\right)\right][\alpha (p-1)+2]\Vert u_{x}\Vert _{L^{2}}^{2}\right\}     \label{ppp}
    \end{eqnarray}%
    where we set
    \begin{equation}
    C=  \frac{[\alpha (p+1)+1]}{(1-c^{2})\alpha}\left[1-c^{2}\left(\frac{p+3}{p-1}\right)\right].  \label{ccc}
    \end{equation}%
    The coefficient of $\Vert u_{x}\Vert _{L^{2}}^{2}$ inside curly brackets in (\ref{ppp}) is the same with that of (\ref{jc}), if we  choose $\alpha$ as follows:
    \begin{equation}
        \alpha =\frac{(a-bc^{2})}{2c^{2}(a-b)}\left[1-c^{2}\left(\frac{p+3}{p-1}\right)\right].  \label{alp}
    \end{equation}%
    Hence, combining (\ref{ccc}) and (\ref{alp}) gives
    \begin{equation}
        C=\frac{(a-bc^{2})\left[1-c^{2}\left(\frac{p+3}{p-1}\right)\right](p+1)+2c^{2}(a-b)}{(1-c^{2})(a-bc^{2})}.  \label{cccnew}
    \end{equation}
     To ensure $\alpha >{1\over 2}$ we must have
    \begin{displaymath}
        (a-bc^{2})\left[1-c^{2}\left(\frac{p+3}{p-1}\right)\right]> c^{2}(a-b).
    \end{displaymath}%
    This can be simplified as follows
    \begin{equation}
       k(c^{2})= b(p+3)c^{4}-2a(p+1)c^{2}+a(p-1)> 0.  \label{baa}
    \end{equation}%
    Since $k(0)>0$ and $k(1)<0$, the function $k(c^{2})$ has only one zero on the interval $(0, 1)$. Then, (\ref{baa}) is satisfied if $c^{2}< c_{0}^{2}$ with
   \begin{eqnarray*}
    c_{0}^{2}&=& \frac{a}{b} \left(\frac{p+1}{p+3}\right) \left[ 1-\left( 1-\frac{b(p+3)(p-1)}{a(p+1)^2} \right)^{1/2}\right] \\
                    &=&  \left(\frac{p-1}{p+1}\right) \left[ 1+\left( 1-\frac{b(p+3)(p-1)}{a(p+1)^2} \right)^{1/2}\right]^{-1} .
    \end{eqnarray*}
    Finally,  it follows from (\ref{cccnew})  that $C>0$ since
    \begin{displaymath}
     c_{0}^{2}\leq  \frac{p-1}{p+3}.
    \end{displaymath}
    We next claim that   $J_{c}(u) \geq Cd(c)$. Since $u\in \widetilde{\Sigma}_{\alpha}$, we have $K_{\alpha}(u) <0$. We can then find $0< \gamma <1$ so that  $K_{\alpha}(\gamma u) =0$. By Lemma \ref{lem2.3}, this implies that $V(\gamma u) \geq d(c)$. But then
    \begin{eqnarray}
    J_{c}(u)& > & \gamma^{2}J_{c}(u) =J_{c}(\gamma u)=C\left[ V(\gamma u)-\frac{1}{\alpha (p+1)+1}K_{\alpha }(\gamma u)\right] \nonumber \\
        & = & CV(\gamma u) \geq Cd(c) \label{jcu}
    \end{eqnarray}
    which proves our claim. We are now in the position of putting all the above calculations together to estimate $H^{\prime\prime}$.    Writing $E(U_{0})+cM(U_{0}) =d(c) -\delta $ with $\delta >0$ in (\ref{hprime}) and using (\ref{mmm}), (\ref{jcu}), we get
    \begin{eqnarray*}
        H^{\prime \prime }
        & \geq &\frac{p+3}{2}\left( \Vert w+cu\Vert_{L^{2}}^{2}+b\Vert w_{x}+cu_{x}\Vert_{L^{2}}^{2}\right) -(p+1)d(c) +(p+1) \delta  \\
        &~& + \frac{2c^{2}}{1-c^{2}}\left[ 1+\frac{b(p-1)(1-c^{2})}{(p+3)(a-bc^{2})}\right] \left(\frac{p+3}{p-1}\right)d(c) +C d(c)  \\
        &=& \frac{p+3}{2}\left( \Vert w+cu\Vert _{L^{2}}^{2}+b\Vert w_{x}+cu_{x}\Vert_{L^{2}}^{2}\right)  +( p+1) \delta +\sigma d(c)
    \end{eqnarray*}%
    where
    \begin{eqnarray*}
        \sigma  &=&-( p+1) +\frac{2c^{2}}{1-c^{2}}\left[ 1+\frac{b(p-1)(1-c^{2})}{(p+3)(a-bc^{2})}\right] \left(\frac{p+3}{p-1}\right) \\
            &&+\frac{(a-bc^{2})\left[1-c^{2}\left(\frac{p+3}{p-1}\right)\right](p+1)+2c^{2}(a-b)}{(1-c^{2})(a-bc^{2})}.
    \end{eqnarray*}%
    A direct calculation shows that  $\sigma $ is zero to yield
    \begin{displaymath}
        H^{\prime \prime }  \geq \frac{p+3}{2}\left( \Vert w+cu\Vert_{L^{2}}^{2}+b\Vert w_{x}+cu_{x}\Vert _{L^{2}}^{2}\right) +(p+1)\delta.
    \end{displaymath}
    So, $H^{\prime \prime }\left( t\right) >\left( p+1\right) \delta $  which in turn implies that $H^{\prime }\left( t_{0}\right) >0$ for some $ t_{0}>0$.     Since  $u=v_{x}$ we have $ \langle v, u \rangle = \langle u, u_{x} \rangle=0$. Then from (\ref{Hfirst})
    \begin{displaymath}
     H^{\prime }
        = \langle v, w+cu \rangle + b \langle u, w_{x}+cu_{x} \rangle .
    \end{displaymath}
    Thus
      \begin{displaymath}
     (H^{\prime })^{2}
        \leq \left(\Vert v\Vert_{L^2}^{2}+ b \Vert u \Vert_{L^2}^{2}\right)   \left(\Vert w+cu\Vert_{L^2}^{2}+ b \Vert w_{x}+cu_{x} \Vert_{L^2}^{2}\right).
    \end{displaymath}
    Finally,  we have
   \begin{displaymath}
    H H^{\prime \prime }-\frac{p+3}{4}\left( H^{\prime } \right)^{2}
        \geq (p+1) H\delta \geq 0.
    \end{displaymath}
    By Levine's Lemma \cite{levine} this shows that $H(t)$ blows up in finite time. This completes the proof.
\end{proof}
\vspace*{20pt}

\setcounter{equation}{0}
\section{Stability Regions for Traveling Waves}
\noindent

In this section we investigate both analytically and numerically the dependence of stability  regions of traveling wave solutions of the double dispersion equation on the parameters $a$, $b$ and $p$. To be precise, by the stability region we mean the set of wave velocities $c$ for which  the traveling wave solutions of (\ref{dde0}) are orbitally stable.  Recall that a traveling wave $\phi_{c}$ is said to be orbitally stable if any solution $U(t)$ with initial data sufficiently close to the traveling wave stays close, at any later time,   to some translate of $\phi_{c}$. It is a well-known phenomena in nonlinear wave theory that orbital stability occurs for all values of $c$ for which a scalar function $d(c)$ is convex \cite{strauss, souganidis, esfahani}. For  the general class given by (\ref{class0}), this  was proved explicitly in \cite{erbay-JMAA}. To apply the convexity criterion to the double dispersion equation, we first rewrite the function $d(c)$ given in (\ref{dcc}) as
\begin{equation}
      d(c)
       = d(0)(1-c^{2})^{\frac{p+3}{2(p-1)}}(1-\mu c^{2})^{\frac{1}{2}},  ~~~~~ \mu={b\over a}. \label{dccy}
\end{equation}%
As $0\leq b < a$, we will consider $0\leq \mu < 1 $.  A direct computation of $d^{\prime\prime}(c)$ gives
\begin{equation}
    \!\!\!\!\!\!\!\!\!\!\!\!\!\!\!\!\!\!\!\!\!\!
   d^{\prime\prime}(c)=d(0)(p-1)^{-2}(1-c^{2})^{{7-3p}\over {2(p-1)}}(1-\mu c^{2})^{-3/2}(Pc^{6}-Qc^{4}+Rc^{2}-S)
\end{equation}
with
\begin{eqnarray*}
    P &=&   2(p+3)(p+1)\mu^{2},     \\
    Q &=&   3( p+3)(p-1)\mu^{2}+(3p^{2}+10p+19) \mu,       \\
    R &=&   2((3p+5)(p-1)\mu+2(p+3)),     \\
    S &=&   (p-1)^{2}\mu+(p-1)(p+3).
\end{eqnarray*}
Hence the sign of $d^{\prime\prime}(c)$ is determined by the sign of the polynomial
\begin{equation}
    G(z, p, \mu)=Pz^{3}-Qz^{2}+Rz-S.
\end{equation}
Recalling that  traveling waves exist for $c^{2}<1$, we see that the stability regions are the set of all wave velocities $c$ for which $c^{2}<1$ and $G(c^{2},p,\mu)>0$. So the problem   reduces to the problem of finding  real roots of $G(z,p,\mu) $ on the interval $ (0,1)$. The remainder of this section focuses on  analyzing how the parameters $p$ and $\mu$ affect the locations of the roots and, consequently, the stability regions. We first restrict our attention to the exploration of locations and number of the roots in $(0,1)$ and then focus on formulating  explicit stability conditions in terms of $c$ in the last part of this section.

First we observe that the coefficients $P$, $Q$, $R$ and $S$ are all positive, so all real roots of $G(z,p,\mu )$ must be positive and $G(0,p,\mu)<0$.  As $P$, $Q$, $R$, $S$ are  continuous in the parameters $p$ and $\mu $, the three (possibly complex) roots $z(p, \mu )$ of the cubic polynomial $G(z,p,\mu )$ depend continuously on  $p$ and $\mu $ for $p>1$ and $\mu >0$.

It will be useful to consider what happens at $z=1$. Computation gives%
\begin{equation}
G(1,p,\mu )=(\mu -1)^{2}(p+3)(5-p).  \label{z=1}
\end{equation}%
Hence, for $\mu<1$, $G(1, p, \mu)>0$ when $p<5$ but $G(1, p, \mu)<0$ when $p>5$. Since $G(0, p, \mu)<0$, the number of distinct  roots of $ G(z, p, \mu)$ in the interval $(0,1)$ must be  {\it (i)} one or three when $p<5$ and {\it (ii)}   zero or two when $p>5$.

Equation (\ref{z=1}) shows that when $p=5$  we have the root  $z_{1}(5, \mu ) =1$. This will allow us to determine completely the case $p=5$.
Factoring  $G(z,5,\mu)$, we get
\begin{equation}
G(z,5,\mu )=16(z-1)(6\mu ^{2}z^{2}-9\mu  z +\mu +2),
\end{equation}%
which yields  the other two distinct roots
\begin{equation}
z_{\pm }(5,\mu ) =\frac{1}{12\mu }( 9\pm \sqrt{33-24\mu }) .
\end{equation}
We now try to locate the roots $z_{\pm }(5,\mu )$ of $G(z, 5, \mu )$ in $(0,1)$. Since $0\leq \mu < 1$, the roots $z_{\pm }(5,\mu )$ are real and, in consequence, there are three real roots. First note that $ \mu < 1$ implies  $ z_{+}(5,\mu ) \geq \frac{1}{\mu }> 1$. On the other hand,  an easy computation shows that $z_{-}(5, \mu ) <1$  if and only if  $ \frac{1}{3}<\mu < 1$. Summing up, we have: $G(z, 5,\mu ) $ has one root $z_{-}(5, \mu )$ in $(0, 1) $  for $\frac{1}{3}<\mu <1$ but no root in  $( 0,1) $ for $ 0 \leq \mu \leq \frac{1}{3}$.

Next we want to use continuity of the roots with respect to the parameters to understand what happens when $p$ is near 5 and $\mu$ is fixed. We first decrease $p$ slightly from $5$. Recall that $G(z,p,\mu)$ must have exactly one root or three roots in $(0,1)$ for $p<5$. Since $z_{+}(5, \mu )>1$ for $\mu <1$, we cannot have the case of three roots. Therefore, for $p$ slightly smaller than 5 and $0\leq \mu < 1$, $G(z,p,\mu)$ will have exactly one root in $(0,1)$. To determine what happens  when $p$ increases slightly from 5, we have to consider two cases: $\mu <{1\over 3}$ and $\mu >{1\over 3}$. When $\mu <{1\over 3}$,  $G(z,5,\mu)$ has two roots $z_{-}(5,\mu)$ and $z_{+}(5,\mu)$ in $(1, \infty)$. For $p$ sufficiently close to $5$, none of these two roots can move into $(0,1)$. Recalling that for $p>5$, $G(z,p,\mu)$  must have zero or two roots in $(0,1)$, and noting that we have just eliminated the possibility of two roots when  $p$ is slightly greater than 5 and $\mu <{1\over 3}$, we conclude that $G(z,p,\mu)$ has no root in $(0,1)$. Finally, consider the case $\mu >{1\over 3}$ with $p$ slightly larger than 5. Since $z_{-}(5,\mu)< z_{1}(5,\mu)=1$, the only possibility  is that the root $z_{1}(5,\mu)$ moves to the left, yielding exactly two roots in $(0,1)$ for $G(z,p,\mu)$.

\begin{figure}[ht]
    \includegraphics[width=200pt]{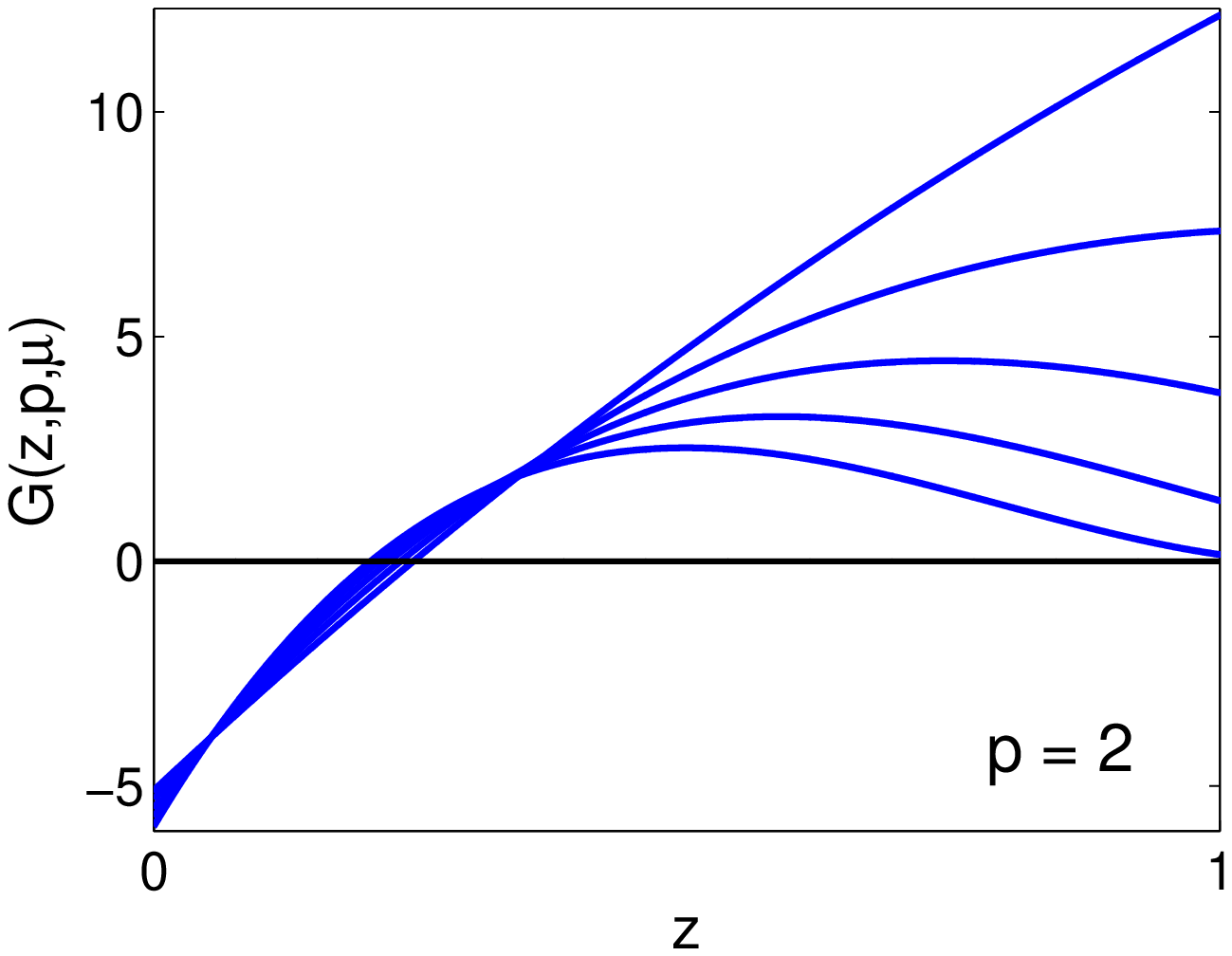}%
    \hspace{10pt}%
    \includegraphics[width=200pt]{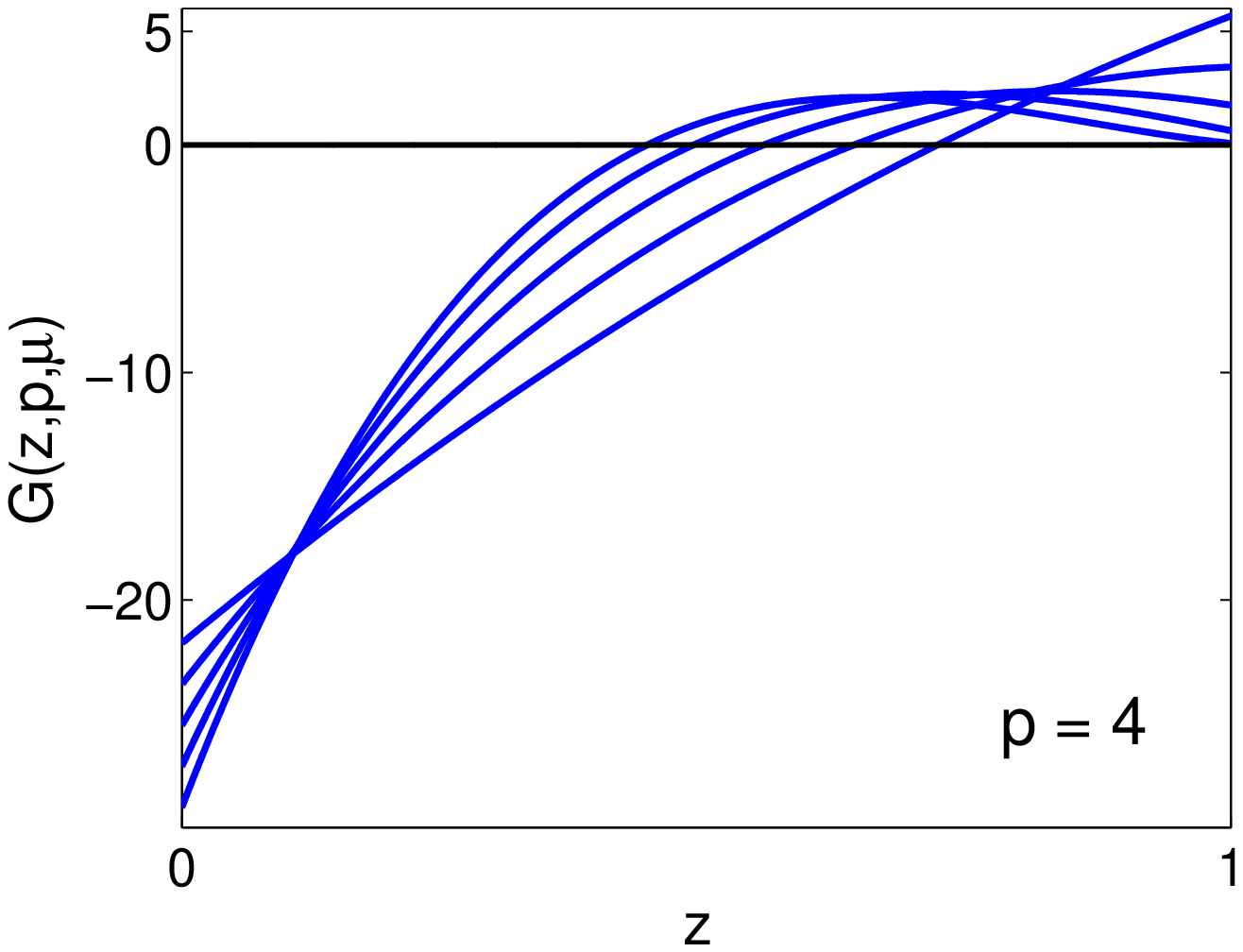}
    \caption{Variation of the function $G(z,p,\mu)$ with $z$ on the interval $[0,1]$ for (a) $p=2$, (b) $p=4$ and for $\mu=0.1,0.3,0.5,0.7,0.9$ (from top to bottom  at the right end-point).}
\end{figure}

Summing up what we know about the total number of roots   on the interval $(0, 1)$,   we have:
\begin{itemize}
\item For any $0\leq \mu <1$,   $G(z,p,\mu)$ has only one root in $(0,1)$ when $p<5$ and $p$ near 5.
\item  For $\mu < \frac{1}{3}$,   $G(z,p,\mu)$ has no root in $(0,1)$ when $p>5$ and $p$ near 5.
\item  For $ \mu > \frac{1}{3}$,  $G(z,p,\mu)$ has two roots in $(0,1)$ when $p>5$ and $p$ near 5.
\end{itemize}

\begin{figure}[ht]
    \includegraphics[width=200pt]{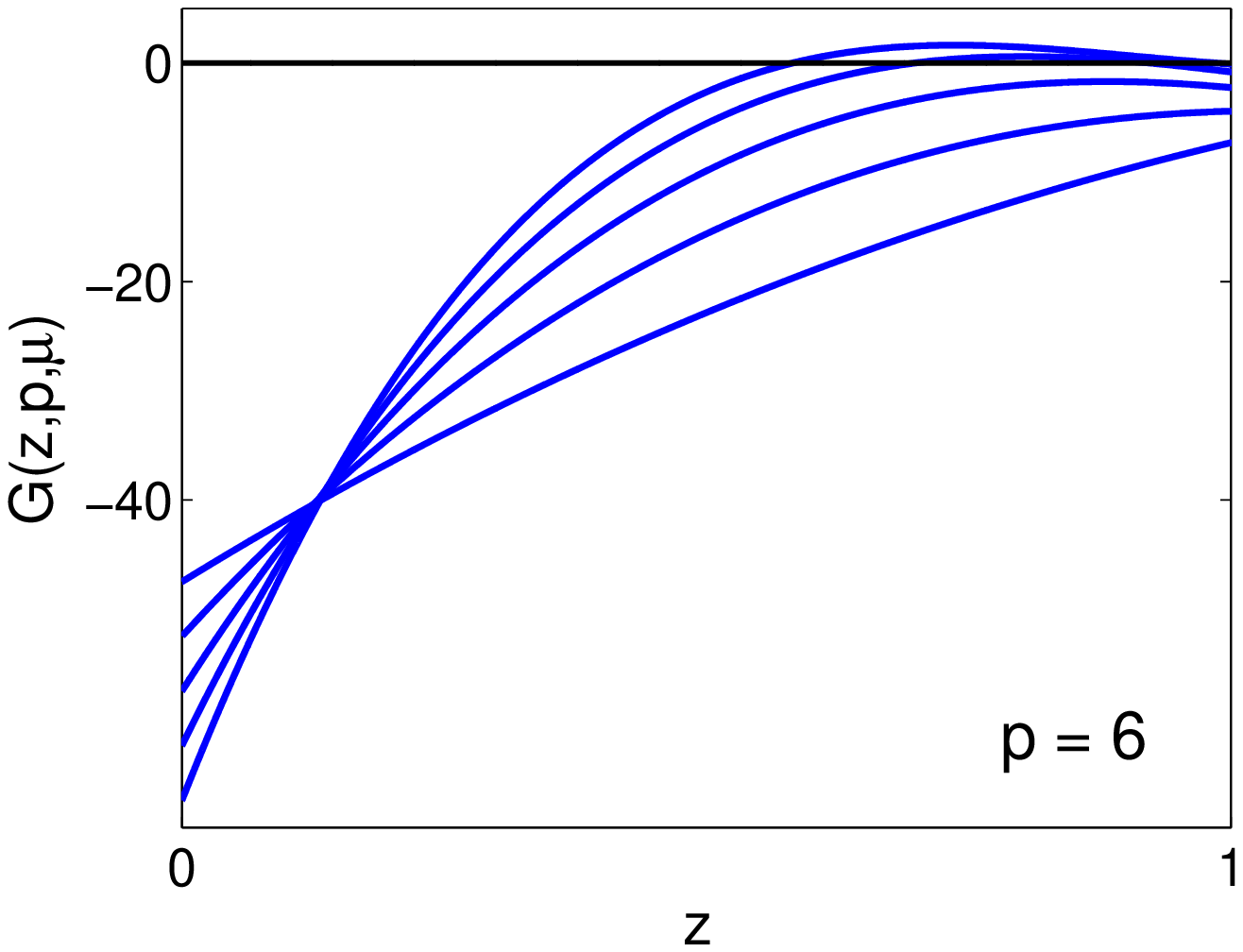}%
    \hspace{10pt}%
    \includegraphics[width=200pt]{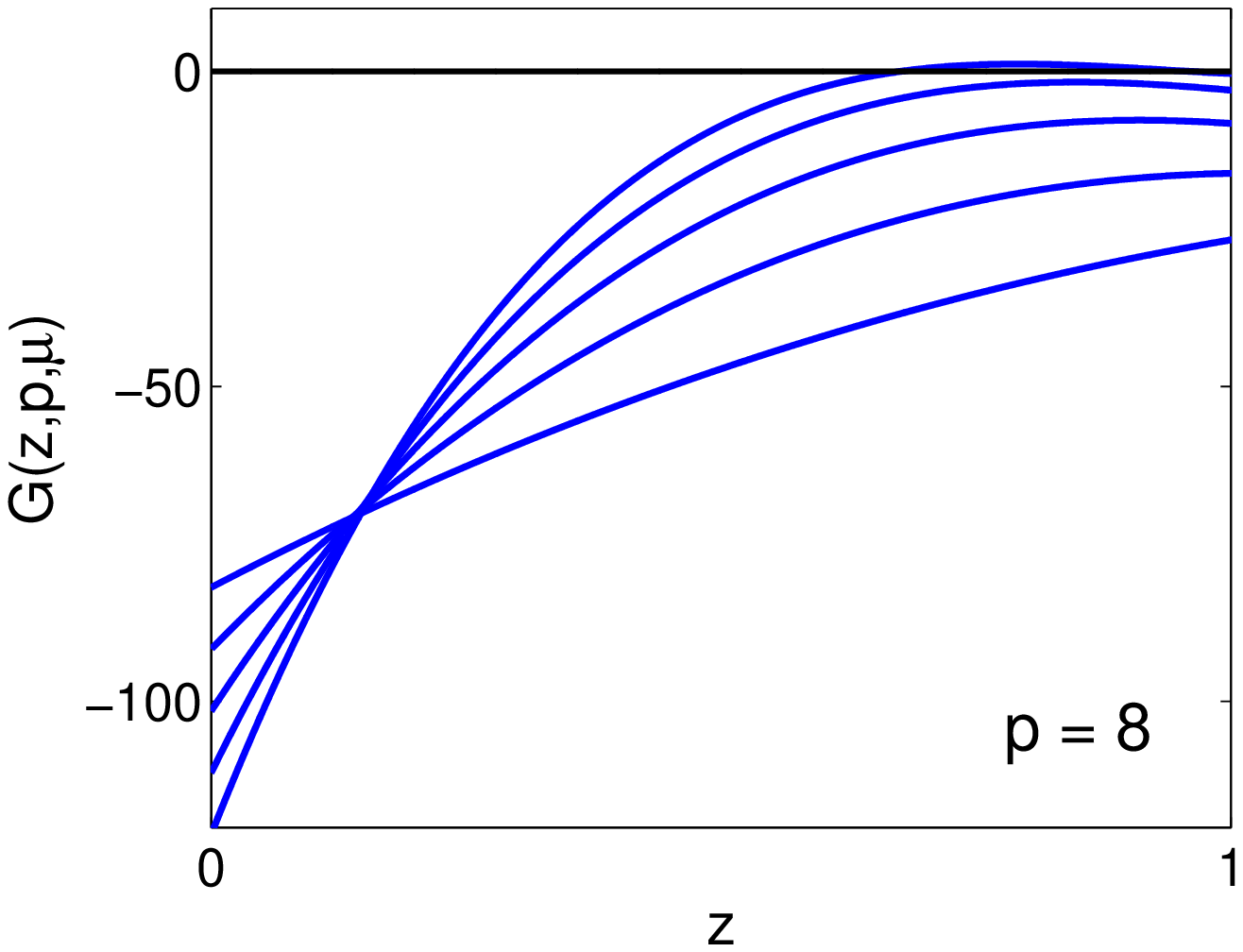}
    \caption{Variation of the function $G(z,p,\mu)$ with $z$ on the interval $[0,1]$ for (a) $p=6$, (b) $p=8$ and for $\mu=0.1,0.3,0.5,0.7,0.9$ (from bottom to top  at the right end-point).}
\end{figure}

For general values of $p$ and $\mu$ we now provide numerical evidence to suggest that exactly the same behavior is observed for other parameter values. In Figures 1 and 2 we present the graph of $G(z,p,\mu)$ as a function of $z$ on $[0,1]$ for $p=2, 4$ and $p=6, 8$, respectively. In each figure, the curves correspond to   the following five cases: $\mu=0.1, 0.3, 0.5, 0.7, 0.9$.      The curves  are identified from (\ref{z=1}) by observing that $G(1, p, \mu)$ is decreasing in $\mu$ for $p<5$ but increasing in $\mu$ for $p>5$. That is, at the right end-point, the curves  correspond to $\mu=0.1, 0.3, 0.5, 0.7, 0.9$  from top to bottom for $p<5$  but from bottom to top for $p>5$, respectively. We see from the figures that the itemized conclusions of the previous paragraph about the number of roots  of $G(z,p,\mu)$  for $p$ near 5 are exactly valid for all values of $p$ and $\mu$ with a critical value $\mu_{p}$ replacing the value $1/3$. Motivated by this fact,  we will make the following claim about the number of roots of $G(z,p, \mu)$ in $(0,1)$:
\begin{itemize}
\item For $p<5$  and $0\leq \mu < 1$,  $G(z,p, \mu)$ has only one root $z_1(p,\mu)$ in $(0,1)$.
\item For $p>5$  there is a critical value $\mu_p \in (0,1)$ so that  $G(z,p, \mu)$ has no root in $(0,1)$ for $0 \leq \mu < \mu_p$ but it has two roots  $z_1(p,\mu), z_2(p,\mu)$ in $(0,1)$ for  $\mu_p < \mu <1$.
\end{itemize}
We note that the above claim contains the case of the Boussinesq equation ($\mu=0$), where $G(z,p,0) $ has exactly one root $z_1(p,0)=\frac{p-1}{4}$.
This root is in  $(0,1)$ if and only if $p<5$. Another limiting case where $\mu=1$ was analysed  in \cite{erbay-JMAA}. In this case we have
\begin{equation}
G(z,p,1) = 2(p+1) (p+3) \left(z-\frac{p-1}{p+3}\right) (z-1)^{2},
\end{equation}%
which implies that  $G(z,p,1)$ has only one root $z= {{p-1}\over{p+3}}$ in $(0,1)$.  Note that this can be considered as a limiting case of the claim above with $z_2(p,1)=1$.

\begin{figure}[ht]
    \centering
    \includegraphics[width=300pt]{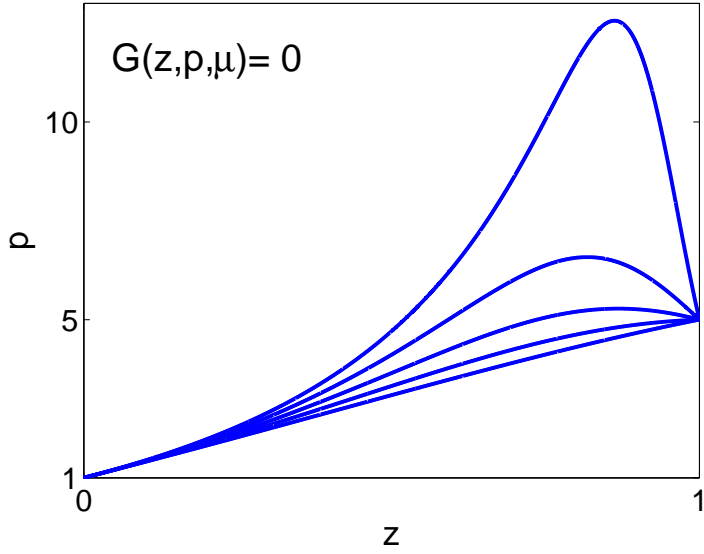}%
    \caption{Variation of $p$ with $z$ on the interval $[0,1]$ when $G(z,p,\mu)=0$ for  $\mu=0.1,0.3,0.5,0.7,0.9$ (from bottom to top).}
\end{figure}

The information collected for the locations of the roots of $G(z,p,\mu)$ allows us to determine the stability regions as follows:
\begin{itemize}
\item  If   $G(z,p,\mu)$ has no root in $(0,1)$, then the stability region is empty.
\item  If   $G(z,p,\mu)$ has one root $z_1(p,\mu)$ in $(0,1)$, then the stability region is the set of wave velocities satisfying $z_1(p,\mu)< c^2 <1$.
\item  If   $G(z,p,\mu)$ has two  roots $z_1(p,\mu)< z_2(p,\mu) $ in $(0,1)$, then the stability region is the set of wave velocities satisfying
$z_1(p,\mu)< c^2 < z_2(p,\mu)$.
\end{itemize}

\begin{figure}[ht]
    \centering
    \includegraphics[width=300pt]{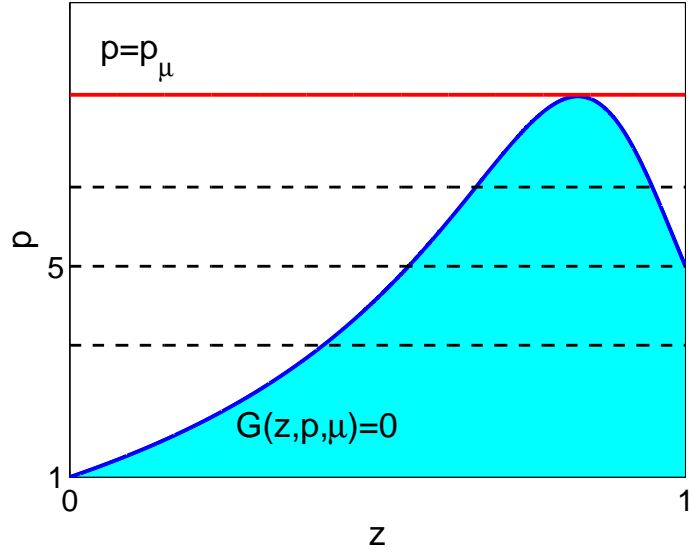}%
    \caption{Schematic diagram of the stability region (shaded region) for a fixed $\mu$.}
\end{figure}

To illustrate the roots of  $G(z, p, \mu) $ and the corresponding stability regions, we have also graphed the set $G(z, p, \mu) = 0$  in the $zp-$plane for certain fixed values of $\mu$ in Fig. 3.  The curves are ordered from bottom to top: the bottom one is the set $G(z, p, 0) = 0$, and the curves move up as $\mu$ increases.  The curves show the location of the real roots of $G(z,p,\mu)$ for the corresponding values of $\mu$. Namely, a point $(z^{*}, p^{*})$ on the curve corresponds to the root $z^{*}$ of $G(z,p^{*},\mu)$. Conforming with our conjecture about the roots, the graph indicates that: {\it (i)} when $p < 5$ there is exactly one root $z_{1}(p, \mu)$ in $(0, 1)$ which decreases as $\mu$ increases; {\it (ii)} when $p > 5$, there is some $\mu_{p}$ such that for $\mu < \mu_{p}$ there is no root in $(0, 1)$ whereas for $\mu > \mu_{p}$ there are two roots $z_{1}(p, \mu) < z_{2}(p, \mu)$ in $(0, 1)$. Moreover, $z_{1}(p, \mu)$ is decreasing in $\mu$, while $z_{2}(p, \mu)$ increases and approaches 1 as $\mu$ approaches 1.  For a fixed $\mu_{0}$, the orbital stability interval is obtained by intersecting the line $p = p_{0}$ with the set $G(z, p, \mu_{0})=0$, this set in turn is either empty or an interval for $c^2$ of the form $(z_{1}(p_0, \mu_0), 1)$ or $(z_{1}(p_0, \mu_0),  z_{2}(p_0, \mu_0) )$. Fig. 3 also indicates that the critical value $\mu_{p}$ increases with $p$. This means that we can as well fix $\mu$ and vary $p$. Then there is a critical value $p=p_{\mu}$ so that  when $p\geq p_{\mu}$ the stability regions are empty. To illustrate this, in Fig. 4 we take a single curve $G(z,p,\mu)=0$ with fixed $\mu$ and several horizontal lines corresponding to different values of $p$. We observe transitions between different types of stability regions as $p$ varies for a fixed $\mu$.  Fig. 4 also gives the critical value $p_{\mu}$. The shaded region in Fig. 4, that is,  the area between  the curve $G(z,p,\mu)=0$ and the line $p=1$,  corresponds to the stability regions of the problem for varying $p$.

To conclude, our analysis in this section leads to the following observation:  {\em   Traveling wave solutions of  the double dispersion equation (\ref{dde0}) are orbitally stable in each of the following three cases;
\begin{eqnarray}
  && (A)~~  p<5  ~~~\mbox{and}~~~ z_{1}(p,\mu)<c^{2}<1, \label{conA} \\
  &&  (B)~~ p=5,   ~~~~{1\over 3}<\mu<1 ~~~\mbox{and}~~~~  \frac{1}{12\mu }( 9- \sqrt{33-24\mu }) <c^{2}<1, \label{conB}\\
  &&  (C)~~ p>5, ~~~~ \mu_{p}<\mu<1 ~~~\mbox{and}~~~ z_{1}(p,\mu)<c^{2}< z_{2}(p,\mu)<1. \label{conC}
\end{eqnarray}
Moreover, for a fixed $p$, as $\mu$ increases,  the stability interval gets larger. Also, for $p>5$, the critical value $\mu_{p}$ increases as $p$ increases. }
\vspace*{30pt}


\end{document}